\newcommand{\tri}{\;\;\makebox[0pt]{$|$}\makebox[0pt]{$\cap$}\;\;}
\theoremstyle{plain}
\newtheorem{theorem}{Theorem}[section]
\newtheorem{lemma}[theorem]{Lemma}
\newtheorem{cor}[theorem]{Corollary}
\newtheorem{dfn}[theorem]{{Definition}}
\theoremstyle{definition}
\newtheorem*{remo}{Remark}
\numberwithin{equation}{section}
\newcommand {\Bn}{\mathbb{N}}
\newcommand {\Bz}{\mathbb{Z}}
\newcommand {\Br}{\mathbb{R}}
\newcommand {\Bq}{\mathbb{Q}}
\begin{document}

\title[Characterization of geodesic flows on $T^2$]
{Characterization of geodesic flows on $T^2$ with and without positive topological entropy}

\author{Eva Glasmachers and Gerhard Knieper }
\date{\today}
\address{Faculty of Mathematics,
Ruhr University Bochum, 44780 Bochum, Germany}
\email{eva.glasmachers@rub.de, gerhard.knieper@rub.de}
\subjclass[2000]{Primary 37C40, Secondary 53C22, 37C10}
\keywords{topological entropy, geodesic flows on tori}

\begin{abstract}
In the present work we consider the behavior of the geodesic flow on the unit
tangent bundle of the 2-torus $T^2$ for an arbitrary Riemannian metric.
A natural non-negative quantity which measures the complexity of the geodesic
flow is the topological entropy. In particular, positive topological entropy
implies chaotic behavior on an invariant set in the phase space of positive
Hausdorff-dimension (horseshoe). We show that in the case of zero topological
entropy the flow has properties similar to integrable systems. In particular
there exists a non-trivial continuous constant of motion which measures the
direction of geodesics lifted onto the universal covering $\Br^2$. Furthermore,
those geodesics travel in strips bounded by Euclidean lines. Moreover we derive
necessary and sufficient conditions for vanishing topological entropy involving
intersection properties of single geodesics on $T^2$.
\end{abstract}


\maketitle


\section{Introduction}

Let $(T^2,g)$ be a two-dimensional Riemannian torus. By $c_v$ we denote the
unique geodesic $c_v:\Br \to T^2$ with the initial condition $\dot c_v(0)=v \in ST^2$.
The geodesic flow on the unit tangent bundle $ST^2$ is given by $\phi^t(v)=\dot c_v(t)$.
We also consider geodesics on $\Br^2$, where $\Br^2$ is equipped with the lifted metric.
The topological entropy  of a continuous dynamical system represents the exponential
growth rate of orbits segments distinguishable with arbitrarily fine but finite precision.
It therefore describes the total exponential orbit complexity by a single number.
Note that due to a theorem of A. Katok \cite{K80} for $C^{1+ \alpha}$-flows, $\alpha>0$,
on 3-dimensional spaces positive topological entropy and the existence of a horse-shoe
are equivalent. We consider in this paper the questions which consequences on the behavior
of geodesics on $T^2$ we can expect under the assumption of zero topological entropy and which
geometrical restrictions on their behavior forbid high complexity of the geodesic flow.
More precisely, we formulate necessary and sufficient conditions for zero topological
entropy.
In order to state the main theorems we have to define the asymptotic direction of
geodesic rays, i.e.\ geodesics  $c:[0, \infty) \to \Br^2.$ Note that unlike in most
text books in our context a geodesic ray does not have to be minimal.
Given a geodesic $c:\Br \to \Br^2$ then we associate to $c$ two geodesic rays given
by $c^+:=c|_{[0, \infty)}: [0, \infty) \to \Br^2$ and
$c^-:[0, \infty) \to \Br^2$ with $c^-(t):=c(-t)$.\\
\\
In the following it will be useful to distinguish the following types of geodesic rays
(see also \cite{W}).\\

\begin{dfn} [types of geodesic rays] ~\\
Given a complete Riemannian metric on $\Br^2$. A geodesic ray $c: [0, \infty) \to \Br^2$
is called
\begin{itemize}
\item
{\it bounded}, if the set $c([0, \infty))$ is bounded.
\item
{\it escaping}, if $\lim\limits_{t \to \infty} \|c(t)\|= \infty$, where $\| \;\cdot \;  \|$
denotes the Euclidean norm  on $\Br^2$.
\item
{\it oscillating}, if $c$ is neither bounded nor escaping.
\end{itemize}
A geodesic $c: \Br \to \Br^2$ is bounded (escaping or oscillating) if both of its geodesic
rays $c^+$ and $c^-$ are bounded (escaping or oscillating).
\end{dfn}

\begin{remo}
It will turn out that mixed cases of geodesics will not be relevant.
Furthermore, their Liouville measure is zero, as proved by M. Wojtkowski in \cite{W}.\\
\end{remo}

\begin{dfn}[asymptotic direction and rotation number] ~\\
Let $c : [0, \infty) \rightarrow \Br^2$ be an escaping geodesic ray on $\Br^2$.
Then, if the limit exists,
\begin{align*}
    \delta(c):= \lim_{t \rightarrow \infty} \frac{c(t)}{\|c(t)\|} \in S^1 \enspace \quad
\end{align*}
is called the {\it asymptotic direction} of $c$.
 Let  $$\pi: S^1= \{(x,y) \mid x^2 + y^2 =1 \} \to \mathbb{P}_1(\Br) \cong \Br \cup \{\infty\},$$
 be the canonical projection onto $\mathbb{P}_1(\Br)$
 defined by
 $$
 \pi(s) = \left\{ \begin{array}{cc}
 \frac{y}{x}, & \text{if} \; x \not=0\\
 \infty, & \text{otherwise} \enspace .
 \end{array} \right.
 $$
 Then we call
\begin{align*}
    \rho(c): = \pi \circ \delta(c) \in  \mathbb{P}_1(\Br) \cong \Br \cup \{\infty\} \enspace \quad
\end{align*}
the {\it rotation number} of $c$. \\

Let $c: [0, \infty) \to T^2$ be a geodesic ray such that its lift $\tilde c: [0, \infty) \to \Br^2$
provides an escaping geodesic ray for which $\delta(\tilde c)$ exists.
 Then we define $\delta(c):=\delta(\tilde c)$ and $\rho(c):=\rho(\tilde c)$.
\\
A rotation number $\rho(c)$ is called {\it rational} if
$ \rho(c)  \in \Bq \cup \{\infty\}$. A direction $\delta(c) \in S^1$ is called {\it rational}
if $\pi(\delta(c)) \in \Bq \cup \{\infty\}$.\\
\end{dfn}

\begin{remo}~\\
\begin{enumerate}
\item[(a)]
The definition
and the existence of the asymptotic direction are independent of the chosen lift.
\item[(b)]
A first definition of a rotation number for minimal geodesics induced by the slope of the
accompanying Euclidean lines goes back to G.~A.~Hedlund (see \cite{Hedlund}) and H.~M.~Morse
(see \cite{Morse}). In~\cite{B1988} V.~Bangert presents a definition of the rotation
number for minimal geodesics, which coincides with our definition restricted to minimal geodesics.
\end{enumerate}
\end{remo}

Now we are able to state our first theorem.\\

\noindent{\bf Theorem~I.}
{\it Let $g$ be a Riemannian metric on $T^2$ with vanishing topological
entropy. Then, on the universal covering $\Br^2$, every geodesic $c$ is escaping without
self-intersections and the directions $\delta(c^+)$ and $\delta(c^-)$ exist. Furthermore,
$\delta(c^+)=-\delta(c^-)$ and $c$ lies in a bounded Euclidean strip $S(c)$, where $S(c)$
is a strip in $\Br^2$ bounded by two Euclidean lines. }\\
\\
Consider the asymptotic direction and the rotation number for a geodesic $c_v: \Br \to T^2$
as functions on the unit tangent bundle defined as $\delta(v):=\delta(c_v^+)$ and
$\rho(v):= \rho(c_v^+)$, if they exist. Obviously $\delta(c_v^-)=\delta(-v)$ and $\rho(c_v^-)=\rho(-v)$.\\

\noindent{\bf Theorem~II.}
{ \it Let $g$ be a Riemannian metric on $T^2$ with vanishing topological entropy.
Then the asymptotic direction $\delta: ST^2 \to S^1$ and
the rotation number $\rho : ST^2 \to \Br \cup \{\infty\}$
are surjective continuous functions invariant under the geodesic flow.
}\\

\begin{remo}
If the constant of motion $\rho$ would be differentiable with non-zero differential
almost everywhere, the system would be integrable in the sense of Liouville-Arnold.\\
\end{remo}

>From now on we represent $T^2$ as $\mathbb{R}^2/\mathbb{Z}^2$, where $\mathbb{Z}^2$ acts
on $\mathbb{R}^2$ via
$$
\tau_{(m,n)}(x,y) =(x+m,y+n) \enspace .
$$
The non-trivial covering transformations $\tau_{(m,n)}$ will be called translation elements.
Sometimes we will denote $\tau_{(m,n)}$ only by $\tau$.  A translation element $\tau$
is called primitive if it is not a nontrivial power of another translation element.
Two translation elements $\tau$ and $\eta$ are called equivalent if there exist
$k, \ell \in \mathbb{Z}\setminus\{0\}$ such that
$$
\tau^k=\eta^{\ell} \enspace.
$$
By $[\tau]$ we denote the corresponding equivalence classes.
\begin{dfn}
 A geodesic
$c: \Br \to \Br^2$ is called an axis if there exists a nontrivial translation element $\tau$ such
that $\tau c(t)=c(t+l)$ for some $l \in \Br$ and all $t \in \Br$. By $\delta(\tau)$ we
will denote the asymptotic direction of the axes of $\tau$. An axis $\alpha$ of a translation
element $\tau^k$ with $k \geq 2$ is called a non-primitive axis if $\alpha(\Br) \not= \tau \alpha(\Br)$.
A geodesic $c: \Br \to T^2$ is called prime-periodic if its lift is an axis of a primitive translation
element $\tau$. 
\end{dfn}

\begin{dfn}
For a geodesic ray $c: [0, \infty) \to \Br^2$ we define
$$
I(c) := \{ [\tau] \mid \# \{c([0, \infty)) \tri \eta c([0, \infty)) \} = \infty
\text{ for some $\eta \in [\tau]$}\}.
$$
\end{dfn}
We will now present a further characterization of the behavior of geodesics on the universal
covering in the case of vanishing topological entropy. By intersections we always mean
transversal intersections.\\

\noindent{\bf Theorem~III.}
{\it Let $(T^2,g)$ be a Riemannian torus with zero topological entropy.
Then $\# I(c) \leq 1$, for each geodesic ray  $c: [0, \infty) \to \Br^2$.
Furthermore, geodesics $c$ with irrational rotation number intersect their translates
$\tau c$ only a finite number of times. Geodesics $c$ with rational rotation number
intersect their translates $\tau c$ an infinite number of times at most for $\tau$ with
$\delta(\tau)=\delta(c^+)$.
}\\
\\
In a previous version of this paper we have had a weaker formulation of the following theorem.
We originally showed applying hyperbolic dynamics and the Curve Shortening flow that the
assumption of the theorem implies zero topological entropy. V.~Bangert showed to us how to
use variational arguments to even conclude flatness. We also note that we obtained flatness
under the stronger assumption that no geodesic on the universal covering intersects its
translate transversally.\\

\noindent{\bf Theorem~IV.}
{\it Let $g$ be a Riemannian metric on $T^2$. Then flatness of the metric $g$ is equivalent
to the condition that no axis $c: \Br \to \Br^2$ on the universal covering intersects
any of its translates.}\\

\begin{remo}~\\
\begin{enumerate}
\item[1)]
The assumption of Theorem~IV is equivalent to the fact that all axes $c: \Br \to \Br^2$ are axes of
primitive elements.
\item[2)]
As the theory of minimal geodesics and the Curve Shortening flow extend to symmetric Finsler metrics,
all Theorems generalize to symmetric Finsler metrics.
\item[3)]
M.~L.~Bialy and L.~Polterovich ~\cite{BP} and independently V.~Bangert~\cite{B1988} present a 
formalism for minimal geodesics on $T^2$ based on a special class of orbits for monotone twist maps.
For this setting they formulate the notion of rotation number. However, in general it is not 
possible to extend their formalism to non-minimal geodesics and arbitrary orbits of twist maps. \\
The study of monotone twist maps with variational methods and many results about their
properties mentioned in \cite{B1988} and \cite{BP} go back to J.~N.~Mather \cite{Mather} and 
independently to S.~Aubry and P.~Y.~Le~Daeron \cite{Aubry}.
In \cite{A-int} S.~B.~Angenent studies orbits of monotone twist maps and their intersection
properties in the case that these maps have vanishing topological entropy. He formulates
analogious results to parts of the statements of Theorem~I and Theorem~III.
\item[4)]
A different approach to understand the relation between the complexity of the geodesic flow
and the behavior of geodesics similar to the one in this paper is presented by S.~V.~Bolotin
and P.~H.~Rabinowitz in \cite{BR}.
\end{enumerate}
\end{remo}

The paper is organized as follows: In the second section we introduce the notion of the
topological entropy and discuss the for us relevant properties of the Curve Shortening
flow. In section three we show that vanishing topological entropy implies the non-existence
of self-intersections of lifted geodesics and the non-existence of contractible geodesics
on $T^2$. Section four deals with geometric conditions which imply via the Curve
Shortening flow positive topological entropy. In the case of vanishing topological entropy
we exclude the existence of oscillating geodesic rays. Under the same assumption, in section
five we prove the existence and regularity of the rotation number. In section six we study
the intersection properties of geodesics on the universal covering with their translates.
In the last section we provide a characterization for flatness of Riemannian metrics on $T^2$.
In particular, we show that no axis on the universal covering intersects its translates
iff the torus is flat.


\section{Topological entropy and curve shortening}

The topological entropy is invariant under topological conjugations and measures as
described in the introduction the exponential orbit complexity  by a single non-negative
number. The precise meaning becomes apparent in the following definition of topological
entropy introduced by R.~E.~Bowen~\cite{BO}.

\begin{dfn}[Topological entropy] \label{topent}
Let $(Y,d)$ be a compact metric space, $\phi^t: Y \to Y$ a continuous flow and
$d(\cdot,\cdot)_{T}$ the dynamical metric defined by
$d(v,w)_{T}:= \max_{0 \leq t \leq T}d(\phi^tv, \phi^tw)$ for all $v,w \in Y$.
We fix $\varepsilon>0$. A subset $F\subset Y$ is called $(\phi,\varepsilon)$-{\it separated}
set of $Y$ with respect to $T$, if for $x_1 \not=x_2 \in F$ it holds $d(x_1,x_2)_{T}> \varepsilon.$ \\
The topological entropy of $\phi^t$ is defined as
$$
h_{top}(g) = h_{top}(\phi)=\lim_{\varepsilon \to 0} \limsup_{T \to \infty}
\left(\frac{1}{T} \log r_T(\phi, \varepsilon)  \right)
. $$
Here $r_T(\phi, \varepsilon)$ denotes the maximal cardinality of any
$(\phi,\varepsilon)$-separated set of $Y$ with respect to $T$.
\end{dfn}
For more details and properties of the topological entropy see for example
\cite{KH} or \cite{Wo}.
\\
\\
To prove that certain geometric constellations imply
positive topological entropy of the geodesic flow on $ST^2$ we will
use the Curve Shortening flow on $T^2$. In the following we will give the precise
definition of this flow and state the properties relevant in this context.
\\
\\
Let $(M,g)$ be a Riemannian surface and
$$\Gamma=\{\gamma\;|\; \gamma: S^1 \to M \;\text{smooth immersed closed curve
}\}$$
a family of (parameterized) immersed smooth closed curves on $M$.
We consider a continuous local semi-flow $\Psi^t: \Gamma \to \Gamma$ with
$\Psi^t(\gamma)=: \gamma_t$ and $t \in [0, T_{\gamma})$ defined by
\begin{eqnarray} \label{csf}
\frac{\partial \gamma_t}{\partial t} = k_t N_t,
\end{eqnarray}
where $k_t$ denotes the geodesic curvature of $\gamma_t$ and $N_t$ its unit normal vector.
This evolution equation defined in~(\ref{csf}) is called the {\it Curve Shortening flow}.
\\
\\
From the following theorem we will derive the existence of closed geodesics:

\begin{theorem}[M.~A.~Grayson, see \cite{GO}]
\label{GO}
Let $M$ be a smooth Riemannian surface which is convex at infinity, i.e.\ the convex hull
of every compact subset is compact. Let $\gamma_0:S^1 \to M$ be a smooth curve, embedded
in $M$. Then, $\gamma_t:S^1 \to M$ exists for $t \in [0, T)$, for some $T>0$, satisfying
the evolution equation (\ref{csf}).\\
If $T$ is finite, then $\gamma_t$ converges to a point. If $T$ is infinite, then the
curvature of $\gamma_t$ converges to zero in the $C^{\infty}$ norm, i.e.\ there exists
a subsequence $t_n$ such that $\gamma_{t_n}$ converges to a closed geodesic.
\end{theorem}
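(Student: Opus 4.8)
The plan is to treat equation~(\ref{csf}) as a quasilinear parabolic system and to follow the classical Gage--Hamilton--Grayson program, adapted to the ambient surface $M$. The first step is \textbf{short-time existence and uniqueness}. Representing $\gamma_t$ locally as a normal graph over a fixed reference curve, (\ref{csf}) becomes (after the standard reparametrization that removes the tangential degree of freedom) a scalar quasilinear, uniformly parabolic PDE for the graph height; the symbol is strictly elliptic because $k_t N_t$ is the Laplacian of the position in arclength. Classical parabolic theory then provides a unique solution on a maximal interval $[0,T)$ with $T>0$, and Schauder estimates with bootstrapping propagate $C^\infty$-smoothness on $(0,T)$.

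The second step is \textbf{preservation of embeddedness and confinement to a compact set}. Embeddedness is maintained by the avoidance (maximum) principle: two initially disjoint solutions stay disjoint, and self-intersections are excluded by controlling the chord-arc ratio, or equivalently via Angenent's result that the number of intersection points of two solutions is non-increasing in $t$. Here the hypothesis that $M$ is \emph{convex at infinity} is essential: the convex hull of $\gamma_0(S^1)$ is compact, and since the flow moves each point in the inward normal direction it cannot leave this hull, so the evolving curve remains in a fixed compact region and never escapes to infinity. Together with the length identity $\frac{dL}{dt}=-\int_{\gamma_t} k^2\,ds\le 0$ this gives the a priori bound $\int_0^{T}\!\int_{\gamma_t} k^2\,ds\,dt\le L(\gamma_0)<\infty$, which drives the asymptotic analysis in both cases.

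The third step is the \textbf{dichotomy at $T$}. If $T<\infty$, a standard argument shows the maximal curvature must blow up as $t\to T$. I would perform a parabolic rescaling at the singular time and use a Huisken-type monotonicity formula (with correction terms coming from the ambient Gauss curvature of $M$, which are controlled since the curve stays in a compact set) to extract a blow-up limit that is a self-shrinking solution in the tangent plane. The classification of embedded self-shrinking curves---the only one is the round circle---then forces the singularity to be a shrinking round point, giving convergence to a point. If $T=\infty$, the curvature stays bounded; Bernstein-type interior estimates upgrade this to uniform bounds on all derivatives of the curvature, and the finiteness of $\int_0^\infty\!\int k^2\,ds\,dt$ yields a sequence $t_n\to\infty$ with $\int_{\gamma_{t_n}} k^2\,ds\to 0$. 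Confinement to a compact set plus the uniform $C^\infty$ bounds let me extract a subsequence converging in $C^\infty$ to a limit curve $\gamma_\infty$ with $k\equiv 0$, i.e.\ a closed geodesic.

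The main obstacle is the finite-time singularity analysis: on a general curved surface one must rule out higher-multiplicity (type~II) singularities and show that the rescaled flow sub-converges to a shrinking circle rather than to some more complicated ancient solution. The ambient curvature of $M$ obstructs a direct appeal to the Euclidean monotonicity formula, so controlling these correction terms and adapting the self-shrinker classification to the curved setting is the delicate technical core of the argument, and is precisely where Grayson's original work in \cite{GO} concentrates its effort.
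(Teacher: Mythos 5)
This theorem is not proved in the paper at all: it is imported from Grayson's work \cite{GO} and used as a black box, so there is no internal argument to compare yours against. Judged on its own terms, your outline is the standard modern roadmap (short-time existence via quasilinear parabolic theory, embeddedness preserved by the avoidance principle, confinement via convexity at infinity, the length identity $\frac{dL}{dt}=-\int k^2\,ds$, and the dichotomy at the maximal time), and the soft parts --- short-time existence, confinement, and the infinite-time case --- are essentially right as sketched. One small correction in your second step: the curve does not move ``inward'' pointwise in any global sense; the correct statement is that a compact totally convex region cannot be exited, by a maximum-principle comparison at a first touching point with its (geodesically convex) boundary, and convexity at infinity is exactly what guarantees such a compact region containing $\gamma_0$ exists.

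The genuine gap is the finite-time case, which you correctly identify as the core but do not actually supply. Essentially all of the difficulty of Grayson's theorem lives there: one must rule out type~II singularities for \emph{embedded} curves (whose parabolic blow-up is the grim reaper, not a self-shrinker), and both the monotonicity formula and the classification of blow-up limits that you invoke require substantial additional work --- or a localization argument showing the ambient curvature is negligible under rescaling --- on a curved surface. Your sketch names these obstacles and then defers them, so as written it is a program rather than a proof. A historical caveat: Grayson's original argument in \cite{GO} does not proceed via Huisken's monotonicity formula (which postdates it and is stated in Euclidean space); the monotonicity/self-shrinker route you describe is the later streamlining due to Huisken, Hamilton, Andrews--Bryan and others. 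Since the paper only cites the result, the appropriate course here is to do the same rather than to attempt a re-proof.
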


The assumption that $M$ is convex at infinity ensures that the set of limit curves exists
since an evolving curve $\gamma_t$ cannot leave a compact set. In our special case of the
torus we will directly show that the embedded curve $\gamma_t$ stays in a compact set on
the universal covering $\Br^2$ of $T^2$ or in a compact set on some unbounded cylinder
$C$ for all $t \in [0,T)$. Then, by excluding that $\gamma_t$ converges to a point we
conclude the existence of a closed geodesic.
\\
As the length $l_t$ of the curve $\gamma_t$ fulfills $$\frac{d l_t}{dt} = - \int k_t^2(s) ds,$$
the length is a decreasing function of $t$. This is why this flow is called Curve Shortening.\\
An important fact is that embedded curves never become singular, unless they shrink
to a point, as proved by M.~A.~Grayson~\cite{GO}.

Furthermore we will apply the following consequence of the maximum-principle for parabolic
differential equations several times:

\begin{theorem}[S.~B.~Angenent, see \cite{A-II}]  \label{2seg}
Let $\gamma_0, \eta_0:[0,1] \to M$ be two curve-segments and $\{\gamma_t\;|\; 0<t<T\}$,
$\{\eta_t\;|\; 0<t<T\}$ solutions of Curve Shortening which satisfy
$$\partial \gamma_t \cap \eta_t = \partial \eta_t \cap \gamma_t= \emptyset$$
for all $t \in [0,T)$. Then the number of intersections of the solutions $\gamma_t$ and
$\eta_t$ is a finite and nonincreasing function of $t \in (0,T)$. The solutions intersect
only transversally except at a discrete set of times $\{t_j\}\subset (0,T)$, and at each
$t_j$ the number of intersections of $\gamma_t$ and $\eta_t$ decreases.
\end{theorem}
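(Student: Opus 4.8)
The plan is to reduce the statement to a Sturmian zero-counting theorem for a scalar linear parabolic equation and then to invoke the nodal-set analysis for such equations. First I would localize. Near any point at which $\gamma_t$ and $\eta_t$ can meet, choose coordinates on $M$ (for instance isothermal coordinates) in which, for a short time interval, both curve-segments are graphs $y=u(x,t)$ and $y=v(x,t)$ over a common $x$-interval. In these coordinates the Curve Shortening flow (\ref{csf}) becomes a quasilinear parabolic equation $u_t=F(x,u,u_x,u_{xx})$ with $\partial F/\partial u_{xx}>0$; in the flat case this is the familiar $u_t=u_{xx}/(1+u_x^2)$, and on a general surface one picks up additional lower-order terms coming from the metric. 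Transversal intersections of the two segments correspond exactly to simple zeros (those with nonvanishing $x$-derivative) of the difference $w:=u-v$, whereas tangential contacts correspond to multiple zeros.

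Next I would linearize in the difference. Writing the nonlinearity as an integral of its derivative along the segment joining $(v,v_x,v_{xx})$ to $(u,u_x,u_{xx})$, the mean-value theorem yields that $w$ solves a linear equation
\[
w_t=a(x,t)\,w_{xx}+b(x,t)\,w_x+c(x,t)\,w,
\]
where $a\geq a_0>0$ is uniformly parabolic on compact sets and $b,c$ are bounded. Thus, locally, the number of intersections of $\gamma_t$ and $\eta_t$ equals the number of zeros of a solution $w$ of a uniformly parabolic scalar equation. The hypothesis $\partial\gamma_t\cap\eta_t=\partial\eta_t\cap\gamma_t=\emptyset$ for all $t$ is precisely what guarantees that no zero of $w$ can be created or destroyed by crossing an endpoint of a segment, so the count is governed purely by the interior behaviour.

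The heart of the matter, and the step I expect to be the main obstacle, is the Sturmian principle for the linear equation above: for each $t>0$ the zero set of $w(\cdot,t)$ is finite, the number $z(t)$ of zeros is nonincreasing in $t$, and $z$ decreases strictly exactly at those times at which $w(\cdot,t)$ has a multiple zero, and these times form a discrete set. Finiteness for $t>0$ follows from parabolic regularity together with unique continuation, which forces the zeros to be isolated and hence finite on a compact interval. For the monotonicity and the strict decrease at degenerate zeros one carries out a local expansion of $w$ near a multiple zero and compares it with the Hermite (heat-kernel) building blocks: a zero of order $m\geq 2$ cannot persist and must resolve into strictly fewer sign changes as $t$ increases, which is exactly where $z$ drops and where the tangential contact becomes transversal again. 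Finally, patching the finitely many coordinate charts covering the compact segments, and using the boundary condition to rule out boundary effects, upgrades the local analysis to the global count and the discreteness of the tangency times $\{t_j\}$ asserted in the theorem.
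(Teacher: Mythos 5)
This statement is quoted by the paper from Angenent's work \cite{A-II} and is not proved there, so there is no internal proof to compare against; what you have written is in fact an outline of Angenent's own argument. Your reduction --- localize near the (compact) set of intersection points, write both segments as graphs $y=u(x,t)$, $y=v(x,t)$ in suitable coordinates, observe that Curve Shortening becomes a quasilinear parabolic equation $u_t=F(x,u,u_x,u_{xx})$ with $\partial F/\partial u_{xx}>0$, and apply the mean value theorem to see that $w=u-v$ satisfies a linear uniformly parabolic equation $w_t=a\,w_{xx}+b\,w_x+c\,w$ --- is exactly the right strategy, and you correctly identify the boundary hypothesis as the device that prevents zeros from being created or destroyed through the endpoints of the segments.

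Two caveats. First, the step you yourself call the heart of the matter carries essentially all of the content: the assertion that for $t>0$ the zero set of $w(\cdot,t)$ is finite, that its cardinality is nonincreasing, and that it drops precisely at the discrete set of times where $w(\cdot,t)$ has a multiple zero is Angenent's parabolic Sturm theorem, itself a separate and nontrivial result; the local Hermite-expansion comparison you gesture at requires, among other things, ruling out interior zeros of infinite order for $t>0$ and establishing a normal form for $w$ near a degenerate zero. As a citation this is acceptable; as written it is a placeholder rather than a proof. Second, two smaller points of care: at a transversal intersection the graph direction must be chosen to avoid both tangent lines (two crossing curves need not both be graphs over an arbitrary axis), and the patching step should record that away from the intersection set the two segments are at positive distance and remain so for a short time, so that the finitely many local zero counts really do assemble into the global intersection count. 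With the zero-set theorem granted, your outline is correct and coincides with the standard (indeed the original) proof.
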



\section{Contractible Closed Geodesics and Self-intersections}

In 1998 J.~Denvir and R.~S. MacKay \cite{DM} showed the following result as a
conclusion of their study on geodesically convex surfaces of negative Euler characteristic.

\begin{theorem}\label{scc}
Let $g$ be a Riemannian metric on $T^2$ with a simple closed contractible geodesic $c$.
Then $g$ has positive topological entropy.
\end{theorem}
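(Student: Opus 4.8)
The plan is to manufacture a transverse homoclinic orbit for a hyperbolic periodic orbit of the geodesic flow, and thereby a Smale horseshoe; since a horseshoe is a compact invariant hyperbolic set carrying positive topological entropy, $h_{top}(g)>0$ follows at once (the converse direction, positive entropy implying a horseshoe, being the content of the cited theorem of Katok \cite{K80}). The tools are the structure of minimal geodesics together with the two curve-shortening results above, Theorem~\ref{GO} and Theorem~\ref{2seg}.

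First I would record the elementary topology and geometry. A simple closed contractible curve on $T^2$ bounds an embedded open disk $D$, so that $c=\partial D$. Applying Gauss--Bonnet to $D$ and using $k_g\equiv 0$ along the geodesic $c$ gives $\int_D K\,dA = 2\pi$; in particular $D$ carries strictly positive total curvature. Lifting to the universal cover, $c$ lifts to the $\Bz^2$-periodic family of pairwise disjoint embedded closed geodesics $\tau\tilde c$ ($\tau\in\Bz^2$), each bounding a disk $\tau\tilde D$, the disks being pairwise disjoint because $D$ embeds in $T^2$. Thus on $\Br^2$ one obtains a periodic array of disjoint geodesic ``lenses'' that act as obstacles.

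Next I would bring in the rigidity of minimal geodesics (Hedlund, Morse, Bangert). Their lifts are globally minimizing lines on $\Br^2$ without self-intersections, and any two such lines meet at most once, since two transverse intersections would bound a geodesic bigon and contradict minimality. Consequently a minimizing line cannot cross a closed geodesic $\tau\tilde c$ transversally twice, and hence cannot enter the interior of $\tau\tilde D$: minimizing lines must route around each lens on a definite side. This ordered, ``zero-entropy'' skeleton is precisely what the positive-curvature obstacle will force more general geodesics to violate.

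The crux, and the main obstacle, is to promote this obstacle geometry into a genuine positive-entropy subsystem. I would view $c$ as a periodic orbit of the flow and argue, using the enclosed positive curvature, that a closed geodesic associated with $D$ is hyperbolic, so that its stable and unstable sets in $ST^2$ consist of geodesics spiralling into, respectively away from, it; one then must produce a transverse intersection of an unstable and a stable branch. Here I would use curve shortening: starting from a long embedded closed curve that crosses $D$ and winds around the torus, Theorem~\ref{GO} produces a limiting closed geodesic, while Theorem~\ref{2seg} shows that the number of intersections with $c$ is nonincreasing and changes only through tangencies. Tracking these intersection counts around successive lenses encodes the two routing choices (``left'' or ``right'' of each disk) as a subshift on two symbols; the forced crossings supplied by the positive-curvature lens prevent the count from collapsing, so the subshift is nontrivial, and a semiconjugacy into the geodesic flow bounds $h_{top}$ from below by $\log 2>0$. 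Establishing the transversality and hyperbolicity rigorously --- equivalently, turning the topological ``forced crossing'' into a genuine horseshoe rather than a merely recurrent tangle --- is the delicate technical heart of the argument, exactly where the positivity $\int_D K\,dA=2\pi$, the rigidity of minimal geodesics, and the intersection-monotonicity of Theorem~\ref{2seg} must be combined.
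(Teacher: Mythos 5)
There is a genuine gap, and it sits exactly where you yourself locate ``the delicate technical heart'': the hyperbolicity of the closed geodesic and the existence of a transverse homoclinic intersection are asserted, not proved, and the first assertion is simply not a consequence of the hypotheses. Gauss--Bonnet gives $\int_D K\,dA=2\pi$, but that is an integral condition on the disk and says nothing about the linearized Poincar\'e return map of $c$; a simple closed contractible geodesic can perfectly well be elliptic or degenerate, so the stable/unstable branches you want to intersect transversally need not exist. Everything downstream (the homoclinic point, the horseshoe, the semiconjugacy onto a two-shift) therefore rests on an unestablished and in general false premise, and the paragraph that is supposed to repair this --- tracking intersection numbers under curve shortening so that ``the forced crossings \ldots prevent the count from collapsing'' --- is a heuristic, not an argument: Theorem~\ref{2seg} only says intersection numbers are nonincreasing, and you give no mechanism that keeps them positive in the limit. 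A secondary error: your claim that a minimizing line cannot cross $\tau\tilde c$ twice ``since two transverse intersections would bound a geodesic bigon and contradict minimality'' does not work as stated; the bigon there is bounded by one arc of a minimizer and one arc of a closed geodesic that is not minimizing, and the standard exchange-and-shorten argument requires both sides to be minimizers (it only yields $d(p,q)\le\min(L_1,L_2)$, which is no contradiction).

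For comparison: the paper does not prove this statement at all --- it is quoted from Denvir and MacKay \cite{DM}, whose argument exploits that $T^2\setminus D$ is a geodesically bounded subsurface of negative Euler characteristic and counts homotopy classes of geodesic arcs, with no hyperbolicity anywhere. Your ``routing around the lenses'' intuition is in fact the right combinatorial picture, and it can be made rigorous by the mechanism this paper uses elsewhere (the Fundamental Lemma~\ref{fundamentallemma} and Corollary~\ref{closed}): place translated copies of the obstacle along a strip, run curve shortening on non-contractible curves confined to the $2^j$ distinct complementary components (the obstacles are stationary solutions, so Theorem~\ref{2seg} confines the evolution), and obtain $2^j$ closed geodesics of length between $b_1 j$ and $b_2 j$ whose initial vectors form a $(\phi,\varepsilon)$-separated set. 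That route produces positive entropy directly from Definition~\ref{topent} by counting, and avoids hyperbolicity entirely; I would redirect your argument along those lines rather than trying to build a homoclinic tangle.
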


This theorem allows us to conclude the following lemma:

\begin{lemma}\label{2-loops}
Let $g$ be a Riemannian metric on $T^2$ and $c:\Br \to \Br^2$ a lift of a geodesic
with two self-intersections such that $c(t_1)=c(t_2)$ and $c(t_3)=c(t_4)$ with
$t_1<t_2<t_3<t_4$. Then $g$ has positive topological entropy.
\end{lemma}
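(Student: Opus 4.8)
The plan is to reduce the statement to the Denvir--MacKay criterion (Theorem~\ref{scc}) by producing a \emph{simple closed contractible geodesic} on $(T^2,g)$, and to manufacture such a geodesic from the two self-intersections by means of the Curve Shortening flow. First I would record that each self-intersection yields a contractible geodesic loop: the arc $\ell_1:=c|_{[t_1,t_2]}$ closes up, since $c(t_1)=c(t_2)$, and being the projection of a closed arc in the universal cover $\Br^2$ it is null-homotopic on $T^2$; likewise $\ell_2:=c|_{[t_3,t_4]}$. After rounding the corner at the self-intersection point I obtain an embedded contractible closed curve $\gamma_0$ (replacing $\ell_1$ by an innermost simple sub-loop if it is not already simple). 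I would then evolve $\gamma_0$ by Curve Shortening. By Grayson's Theorem~\ref{GO}, applied on $\Br^2$ or on a suitable cylinder (using that $\gamma_t$ stays in a compact region, as remarked after that theorem), the flow $\gamma_t$ exists and either converges, along a subsequence, to a closed geodesic, or it shrinks to a point. Since the flow preserves embeddedness and the free homotopy class, in the convergence case the limit is a simple closed contractible geodesic and Theorem~\ref{scc} finishes the proof.

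The whole difficulty is therefore to \emph{exclude collapse to a point}, and this is exactly where the second self-intersection enters. The key observation is that a geodesic has vanishing geodesic curvature and is hence a stationary solution of~(\ref{csf}); thus any arc $\beta$ of $c$ is a fixed barrier to which Angenent's Theorem~\ref{2seg} applies against the evolving family $\gamma_t$. The number of transversal intersections of $\gamma_t$ with $\beta$ is then finite and non-increasing, and, since it can only drop by the annihilation of a pair of crossings at a tangency, its parity is preserved. I would use the two loops to choose $\gamma_0$ so that its bounding disk $D$ contains a strand of $c$ in its interior: in the nested case (one loop lying inside the disk of the other) the connecting arc $m:=c|_{[t_2,t_3]}$ threads that disk; in the unnested \emph{dumbbell} case I would instead take $\gamma_0$ to be the outer boundary of a neighborhood of $\ell_1\cup m\cup\ell_2$, whose disk contains the loop arcs in its interior. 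In either case I can pick a barrier arc $\beta\subset c$ with one endpoint inside $D$ and one endpoint far outside, so that $\#(\gamma_0\cap\beta)$ is odd, hence at least $1$.

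With this configuration in place the argument closes. By Theorem~\ref{2seg} the parity of $\#(\gamma_t\cap\beta)$ is constant in $t$, so $\#(\gamma_t\cap\beta)\geq 1$ for all $t$; in particular $\gamma_t$ always meets $\beta$ and therefore cannot shrink to a point, for a curve collapsing to a point $p\notin\beta$ would eventually be disjoint from $\beta$, i.e.\ have intersection number $0$, which is even. Hence Grayson's alternative forces convergence to a nontrivial closed geodesic, which is simple and contractible, and Theorem~\ref{scc} yields $h_{top}(g)>0$.

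I expect the main obstacle to be the careful bookkeeping behind the barrier: I must arrange that the endpoints of $\beta$ never lie on $\gamma_t$ during the evolution (the hypothesis $\partial\beta\cap\gamma_t=\emptyset$ of Theorem~\ref{2seg}), which requires choosing the interior endpoint off the eventual collapse locus and possibly re-selecting $\beta$ along the flow, and that the corner-smoothing produces a genuinely embedded curve whose disk interior is threaded by $c$. Verifying that the dumbbell construction indeed places a strand of $c$ strictly inside $D$, and that passing to innermost sub-loops keeps the curves embedded, are the places where the two-self-intersection hypothesis is used in an essential way and where the details will need the most attention.
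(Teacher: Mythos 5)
Your overall strategy (an auxiliary embedded curve evolved by Curve Shortening, confinement by minimal axes, Grayson's Theorem~\ref{GO}, and the Denvir--MacKay Theorem~\ref{scc}) is the same as the paper's, and the confinement and convergence parts are fine. The gap is in the step that excludes collapse. Your barrier is an arc $\beta\subset c$ with one endpoint $q$ in the interior of the disk $D$ bounded by $\gamma_0$, and you argue that the parity of $\#(\gamma_t\cap\beta)$ is conserved. That conservation holds only as long as $\partial\beta\cap\gamma_t=\emptyset$, and this hypothesis necessarily fails: if $\gamma_t$ collapses to a point $p\neq q$, then $q$ must pass from the bounded to the unbounded component of $\Br^2\setminus\gamma_t$, so by continuity there is a time $t^*$ with $q\in\gamma_{t^*}$. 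At that instant the parity legitimately changes (a crossing escapes through the endpoint of $\beta$), and afterwards $\gamma_t$ may become disjoint from $\beta$ and shrink to a point, so no contradiction is reached. You anticipated this difficulty, but ``re-selecting $\beta$ along the flow'' does not repair it: any replacement arc with an endpoint trapped inside $\gamma_t$ suffers the same fate, while a proper arc or a closed loop has even intersection number with the null-homotopic curve $\gamma_t$ and gives no obstruction. Note also that Theorem~\ref{2seg} does not assert that intersections vanish in pairs; the parity statement is the topological mod~$2$ invariance, which is exactly what breaks at $t^*$.

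The paper's proof plugs precisely this hole, and this is where the hypothesis of \emph{two} self-intersections is really used. There one takes $\gamma_0$ enclosing the whole arc $c([t_1,t_4])$ with $\#\bigl(\gamma_0\cap c([t_1,t_4])\bigr)=0$ and shows that no intersection can ever be created: by Theorem~\ref{2seg} a new intersection could appear only when $\gamma_t$ passes through an endpoint $c(t_1)$ or $c(t_4)$ of the barrier, but $c(t_1)=c(t_2)$ and $c(t_4)=c(t_3)$ are simultaneously interior points of the shortened barrier $c([t_1+\varepsilon,t_4-\varepsilon])$, so such an event would create an interior intersection with that sub-arc, which Theorem~\ref{2seg} forbids. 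Hence $\gamma_t$ never meets $c([t_1,t_4])$ and cannot collapse. In your construction the two loops are used only to thread a strand of $c$ through $D$ --- something a single self-intersection can already provide --- which is a sign that the essential mechanism is missing. The collapse-exclusion step needs to be reworked along these lines.
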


\begin{proof} We use similar ideas as S.~Angenent in~\cite{A-pre}.
Let $\gamma_0:S^1 \to \Br^2$ be a simple closed contractible and smooth curve such
that $\gamma_0$ and $c([t_1, t_4])$ do not intersect on $\Br^2$ and such that
$c[t_1, t_4]$ lies in the bounded connected component of $\Br^2 \setminus \gamma_0(S^1)$.
We apply the curve-shortening flow to $\gamma_0$ and $c$. Note that $c([t_1, t_4])$
is a constant solution. Assume, that $\gamma_t$ shrinks to a point. Then the solution
$\gamma_t$ has to pass $c([t_1, t_4])$ under the flow. Consider $\tilde t \geq 0$ with
$\gamma_{\tilde t}(S^1) \cap c([t_1, t_4]) \not= \emptyset$ such that $\tilde t$ is
the first time at which the segments meet. By Theorem \ref{2seg} new intersections
under the flow can only appear in the endpoints $c(t_1)$ or $c(t_4)$. Consider a small
$\varepsilon>0$ such that $t_1 + \varepsilon < t_{2}$ and $t_4 - \varepsilon>t_3$.
As $c(t_1)=c(t_2)$ and $c(t_3)=c(t_4)$ the curve $\gamma_{\tilde t}$ meets the
geodesic segment $c([t_1 + \varepsilon, t_4 - \varepsilon])$ also at
$s \in (t_1 + \varepsilon, t_4 - \varepsilon)$, in contradiction to Theorem \ref{2seg}.
Hence, as $\gamma_t$ cannot pass $c([t_1, t_4])$ it will not shrink to a point.\\
Furthermore, $\gamma_t$ will stay in a bounded set ${\mathcal{K}} \subset \Br^2$
for all $t$: We consider two non-equivalent primitive translation elements $\tau, \eta$
and two corresponding axes $\alpha_{\tau}$ and $\alpha_{\eta}$. For $k \in \Bn$ large
enough the curve $\gamma_0$ lies in the open strip between $\tau^k \alpha_{\eta}$ and
$\tau^{-k}\alpha_{\eta}$, and in the open strip between $\eta^k \alpha_{\tau}$
and $\eta^{-k}\alpha_{\tau}$.
\begin{figure}[h!]
\begin{center}
\psfrag{a}{$\tau^k\alpha_{\eta}$}
\psfrag{b}{$\eta^k\alpha_{\tau}$}
\psfrag{d}{$\tau^{-k} \alpha_{\eta}$}
\psfrag{c}{$\eta^{-k} \alpha_{\tau}$}
\psfrag{e}{$c$}
\psfrag{K}{$\mathcal{K}$}
\psfrag{f}{$\gamma_0$}
\includegraphics[scale=0.36]{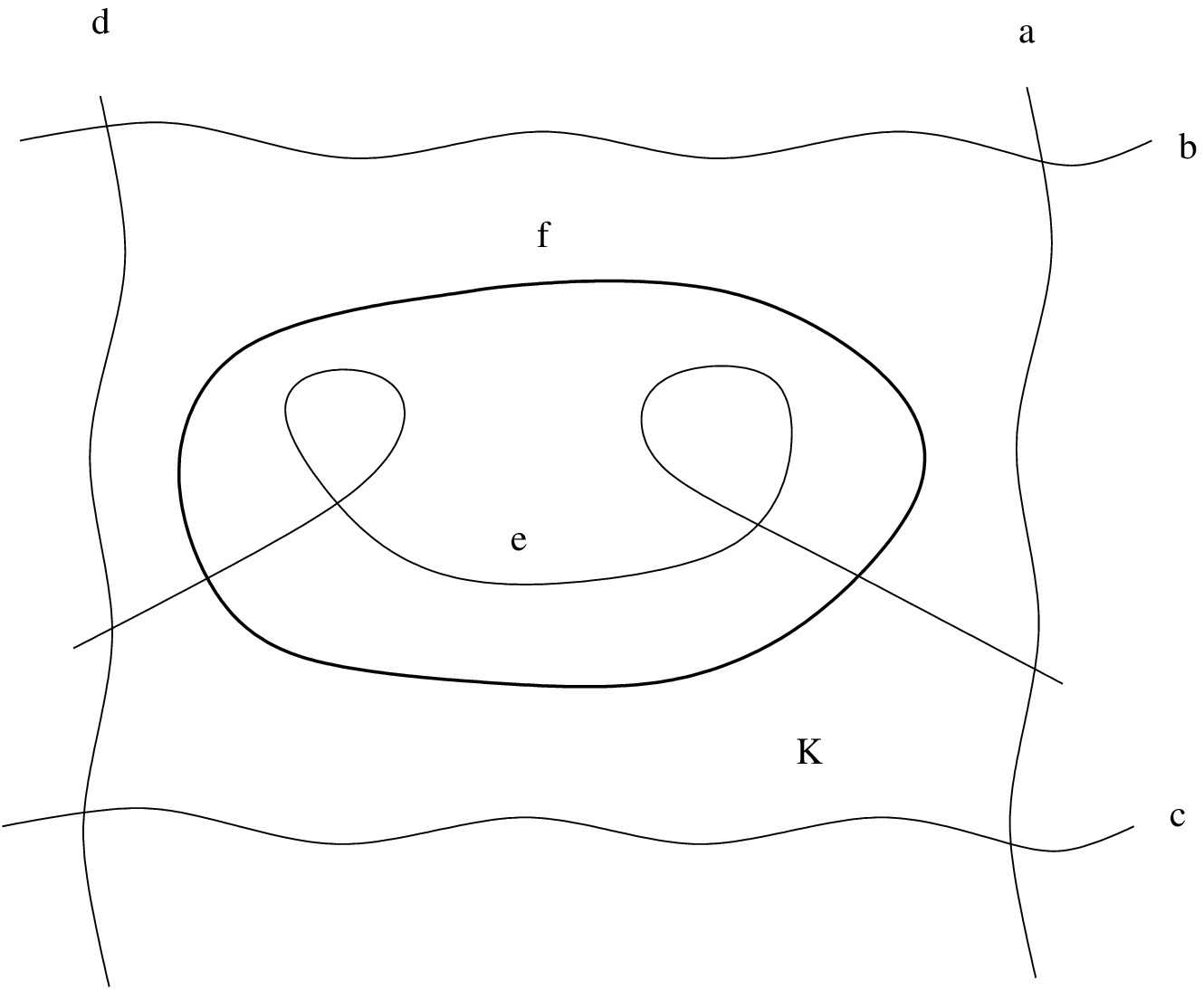}
\end{center}
\caption{Illustration of the two loops of $c$ and $\gamma_0$ in the proof of Lemma~\ref{2-loops}.}
\end{figure}
The intersection ${\mathcal{K}}$ of the closures of these strips defines a region from
which $\gamma_t$ cannot escape, because since $\gamma_0$ does not intersect any of
the bounding minimal geodesics, by Theorem~\ref{2seg} $\gamma_t$ will not intersect
them for all $t>0$.\\
Now we apply M.~A.~Grayson's Theorem~\ref{GO}: Since $\gamma_t$ will not shrink to a
point and stays in a compact set for all times, there exists a simple closed geodesic
on $\Br^2$. By Theorem \ref{scc} $g$ has positive topological entropy.\\
\end{proof}

In the sequel we will make use of the following standard recurrence theorem
(see for instance \cite{Wo}, page 157).
\begin{theorem}
Let $X$ be a manifold and $\mu$ a probability measure invariant under a continuous
flow $\phi^t: X \to X$. Then almost all $p \in X$ are recurrent, i.e.\ there exists a
sequence $t_n$ with $t_n \to \infty$ as $n \to \infty$ such that
$$
\lim\limits_{n \to \infty} \phi^{t_n}p =p \enspace .
$$
Furthermore, if $p$ is recurrent all points on the orbit $\phi^t(p)$ are recurrent
as well. If $\mu$ is positive on open sets, the set of recurrent points is dense.
\end{theorem}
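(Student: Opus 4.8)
The plan is to reduce the statement to the classical Poincar\'e recurrence theorem applied to the time-one map $T := \phi^1$. Since $\mu$ is invariant under the whole flow, it is in particular invariant under $T$, so $T$ is a measure-preserving transformation of the probability space $(X,\mu)$. The heart of the matter is the set-theoretic recurrence statement: for any measurable $A$ with $\mu(A)>0$, almost every point of $A$ returns to $A$ under infinitely many iterates of $T$. I would recall its short proof by letting $E \subset A$ be the set of points that never return to $A$; the preimages $T^{-n}E$ are pairwise disjoint (a return of $T^{-j}E$ into $T^{-i}E$ with $i<j$ would force a point of $E$ to re-enter $A$), they all have measure $\mu(E)$ since $T$ preserves $\mu$, and in a probability space this forces $\mu(E)=0$. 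The set of points of $A$ returning only finitely often is then contained in $\bigcup_{n\geq 0} T^{-n}E$, a countable union of null sets, hence null.

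To upgrade this to genuine topological recurrence I would exploit that a manifold is second countable, and fix a countable basis $\{U_k\}$ of the topology. A point $x$ fails to be recurrent precisely when some basis element $U_k$ contains $x$ while containing $T^n x$ for only finitely many $n$. Writing $N_k := \{x \in U_k : T^n x \in U_k \text{ for only finitely many } n\}$, Poincar\'e recurrence gives $\mu(N_k)=0$ (trivially so when $\mu(U_k)=0$), and the non-recurrent set is contained in $\bigcup_k N_k$, again a countable union of null sets. Thus for almost every $x$ and every $m$ there is an integer $n_m > m$ with $\phi^{n_m}x$ within distance $1/m$ of $x$, giving a sequence $t_m = n_m \to \infty$ with $\phi^{t_m}x \to x$; this is exactly the asserted recurrence, the discrete return being a special case of the real-time statement.

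For the invariance of recurrence along orbits I would use only continuity of the flow: if $p$ is recurrent with $\phi^{t_n}p \to p$ and $q = \phi^s p$, then $\phi^{t_n}q = \phi^s(\phi^{t_n}p) \to \phi^s p = q$ because $\phi^s$ is continuous, so $q$ is recurrent as well. For the density statement, the recurrent set $R$ has full measure, so its complement is null; if $\mu$ is positive on open sets then no nonempty open set can be contained in this null complement, whence $R$ meets every nonempty open set and is therefore dense.

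I expect the main obstacle to be the second step: passing from recurrence relative to a single positive-measure set, which is all that Poincar\'e directly provides, to recurrence into \emph{every} neighborhood of the point itself. The countable-basis reduction is what makes this work, and it is the only place where the topology of $X$, rather than merely its measure structure, genuinely enters the argument.
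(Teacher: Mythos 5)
Your proof is correct: the reduction to the time-one map, the Poincar\'e argument via the disjoint preimages $T^{-n}E$, the upgrade to topological recurrence through a countable basis, and the continuity and positivity arguments for the last two claims are all sound. The paper itself gives no proof of this statement --- it is quoted as a standard recurrence theorem with a reference to Walters's book --- and your argument is essentially the standard one being invoked there, so there is nothing further to compare.
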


\begin{theorem}\label{no-intersections}
Let $g$ be a Riemannian metric on $T^2$ with zero topological entropy. Then no lift
of a geodesic has a
self-intersection.
\end{theorem}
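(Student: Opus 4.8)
The plan is to argue by contradiction and reduce to Lemma~\ref{2-loops}. Suppose some lift of a geodesic has a (transversal) self-intersection. Equivalently, the corresponding geodesic $c_v$ on $T^2$ possesses a contractible geodesic loop: a parameter interval $[s_1,s_2]$ with $c_v(s_1)=c_v(s_2)$ such that $c_v([s_1,s_2])$ is null-homotopic. My aim is to manufacture, on a single lift $\tilde c:\Br\to\Br^2$ of one geodesic, two such loops occurring at well-separated parameters $s_1<s_2<s_3<s_4$ with $\tilde c(s_1)=\tilde c(s_2)$ and $\tilde c(s_3)=\tilde c(s_4)$; this is exactly the hypothesis of Lemma~\ref{2-loops}, which then forces positive topological entropy, contradicting our assumption.

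The mechanism for producing the second loop is recurrence. First I would observe that a transversal self-intersection is stable: by continuous dependence of geodesics on their initial vectors together with the implicit function theorem, there is an open neighborhood $U\subset ST^2$ of $v$ such that for every $w\in U$ the geodesic $c_w$ has a transversal self-intersection at nearby parameters, and since contractibility is a homotopy invariant and hence locally constant, the resulting loop is again contractible. The Liouville measure on $ST^2$ is a finite $\phi^t$-invariant measure which is positive on open sets; by the recurrence theorem stated above (which carries no label) the set of recurrent points is dense and therefore meets $U$. Choose a recurrent vector $v\in U$ and a sequence $t_n\to\infty$ with $\phi^{t_n}v\to v$, so that $\phi^{t_n}v\in U$ for all large $n$.

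Fixing such an $n$, the geodesic $c_v$ carries a contractible transversal self-intersection loop near time $0$ (because $v\in U$) and another one near time $t_n$ (because $\phi^{t_n}v\in U$, and $c_{\phi^{t_n}v}(s)=c_v(t_n+s)$, so the shifted geodesic self-intersects near its own origin). Since $t_n$ is large these two loops occur on disjoint parameter intervals, giving $s_1<s_2<s_3<s_4$. Now pass to a lift $\tilde c:\Br\to\Br^2$ of $c_v$: each contractible loop of $c_v$ lifts to a closed loop of $\tilde c$, so $\tilde c(s_1)=\tilde c(s_2)$ and $\tilde c(s_3)=\tilde c(s_4)$. Lemma~\ref{2-loops} applied to $\tilde c$ yields positive topological entropy, the desired contradiction. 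The degenerate case in which the self-intersection is non-transversal with equal tangent directions makes $c_v$ a closed contractible geodesic; if it is simple one invokes Theorem~\ref{scc} directly, and otherwise it already contains a transversal self-intersection to which the previous argument applies.

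The main obstacle is the stability step: one must check that a transversal self-intersection persists under perturbation of the initial vector and that the enclosed loop stays contractible, so that the recurrence-produced near-copy of $v$ genuinely reproduces a contractible loop rather than a loop of some other homotopy class. Transversality is precisely what guarantees this openness, and it is also what ensures the two loops can be taken with $s_2<s_3$, so that the configuration matches the hypothesis of Lemma~\ref{2-loops} verbatim.
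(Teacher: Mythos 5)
Your proposal is correct and follows essentially the same route as the paper: both arguments use the stability of a transversal self-intersection under perturbation of the initial vector together with density of recurrent points to produce a second loop at a much later parameter on the same lift, and then invoke Lemma~\ref{2-loops}. The only differences are cosmetic (you perturb to a recurrent vector first and then apply recurrence, while the paper treats the recurrent case first and then approximates; your explicit treatment of the tangential case via Theorem~\ref{scc} is a small bonus the paper leaves implicit).
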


\begin{proof}
Let $c:\Br \to \Br^2$ be a lift of a geodesic and assume that $c$ has a self-intersection.
Then there exist $t_0 < t_1 \in \Br$ with $c(t_0)= c(t_1)$. Let $\dot c(t_0)=w$ denote
the initial condition of $c$ in $t_0$. Assume first that $c$ is recurrent. Then there
exists an increasing sequence of times $(t_n)_{n \in \Bn}$ and a sequence $\tau_n$ of
translation elements such that $D\tau_n\dot c(t_n)=w_n \to w$ for $n \to \infty$, see
Figure~\ref{int}. For $n$ large enough and $t_n$ much larger than $t_1$ there exist $t$
near $t_n$ and $s>0$ such that $\tau_n c(t)=\tau_n c(t+s)$. Hence, $c(t)=c(t+s)$.
As $t_0<t_1<t<t+s$ by Lemma~\ref{2-loops} the metric $g$ has positive topological entropy,
in contradiction to the assumption.
\begin{figure}[h!]
\begin{center}
\psfrag{c}{$c$}
\psfrag{ct}{$\tau_n c$}
\psfrag{w}{$w$}
\psfrag{wn}{$w_n$}
\includegraphics[scale=0.56]{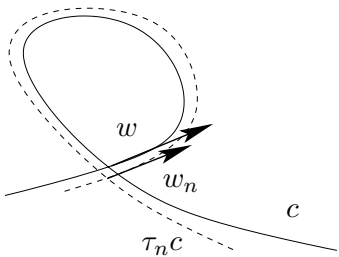}
\end{center}
\caption{Illustration of $c$ and $\tau_n c$ in the proof of Theorem~\ref{no-intersections}.} \label{int}
\end{figure}
Let $c$ be an arbitrary geodesic. As recurrent geodesics are dense,
there exists a sequence of recurrent geodesics $c_n$ with $\dot c_n(t_0)=w_n$ such
that $w_n$ converges to $w$. Since $c$ has a self-intersection, i.e.\ $c(t_0)=c(t_1)$,
the continuous dependency implies that for $n$ large enough the recurrent geodesic $c_n$
has a self-intersection. Then by Lemma~\ref{2-loops} the metric $g$ has positive
topological entropy in contradiction to the assumption.
\end{proof}

The following theorem is due to V.~Bangert in~\cite{B1981}
\begin{theorem} \label{bounded}
Let $g$ be a complete Riemannian metric on $\Br^2$. Then the existence of a bounded
geodesic ray $c:[0, \infty) \to \Br^2$ implies the existence of a simple closed geodesic.
\end{theorem}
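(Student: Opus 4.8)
The plan is to reproduce the Curve Shortening argument from the proof of Lemma~\ref{2-loops}, now using the bounded ray itself as the obstacle that prevents the evolving curve from collapsing. Since $c$ is bounded, its closure $K:=\overline{c([0,\infty))}$ is compact. I fix $p\in K$, choose $R$ so large that $K\subset B(p,R/2)$, and take for the initial curve the smooth embedded circle $\gamma_0:=\partial B(p,R)$, which is disjoint from $c([0,\infty))$ and has the ray in the bounded component of $\Br^2\setminus\gamma_0(S^1)$. By completeness I extend $c$ slightly to $c:(-\varepsilon_0,\infty)\to\Br^2$. Running the Curve Shortening flow~(\ref{csf}) on $\gamma_0$ produces embedded curves $\gamma_t$ on $[0,T_{\max})$, while every geodesic segment of $c$ is a \emph{static} solution of~(\ref{csf}). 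By Grayson's Theorem~\ref{GO} the flow either shrinks $\gamma_t$ to a point in finite time or has curvature tending to zero with a subsequence converging to a closed geodesic; since embedded curves stay embedded, such a limit is a \emph{simple} closed geodesic. Thus it suffices to exclude the collapse to a point.

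To exclude the collapse I use the intersection principle of Theorem~\ref{2seg}, with $c$ as a barrier. The key claim is that $\gamma_t$ never meets the ray. Suppose to the contrary that $\tilde t:=\inf\{t:\gamma_t\cap c([-\varepsilon_0,\infty))\neq\emptyset\}<T_{\max}$, and let $c(s_0)$ be a meeting point at time $\tilde t$. Choosing $T>s_0$ and $\varepsilon\in(0,\varepsilon_0)$ generically so that neither $c(-\varepsilon)$ nor $c(T)$ is a meeting point at $\tilde t$, the endpoints of the static segment $\sigma:=c|_{[-\varepsilon,T]}$ stay disjoint from $\gamma_t$ on all of $[0,\tilde t]$ (on $[0,\tilde t)$ the whole ray is avoided, and at $\tilde t$ the endpoints are avoided by the generic choice). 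Hence Theorem~\ref{2seg} applies to $\gamma_t$ and $\sigma$ on $[0,\tilde t]$ and the intersection number is non-increasing; but it equals $0$ on $[0,\tilde t)$ and is positive at $\tilde t$, where the meeting occurs at the interior parameter $s_0\in(-\varepsilon,T)$, a contradiction. Therefore $\gamma_t$ avoids $c$ for all $t$, so the ray remains trapped in the bounded component of $\Br^2\setminus\gamma_t(S^1)$; as $c$ is non-constant, $\gamma_t$ always encircles a set of positive diameter and cannot shrink to a point. Consequently Grayson's infinite-time alternative holds and the flow yields a simple closed geodesic.

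The hard part will be the compactness bookkeeping needed before Grayson's theorem can be applied. First, Theorem~\ref{GO} requires $M$ to be convex at infinity, which is not automatic for an arbitrary complete metric on $\Br^2$; I would deform $g$ outside a large ball to be Euclidean near infinity and observe that, by the barrier argument above together with the length bound, the entire flow and the comparison with $c$ take place in the unmodified region, so the resulting closed geodesic is a geodesic of $g$. Second, I must guarantee that $\gamma_t$ stays in a fixed compact set so that a limit curve exists: since the length $l_t$ is non-increasing one has $l_t\le l_0$, hence $\mathrm{diam}(\gamma_t)\le l_0/2$, and because $\gamma_t$ keeps the fixed compact set $K$ in its bounded component it cannot drift off to infinity and remains in a fixed ball about $p$. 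Making this containment estimate rigorous for a general complete metric, and fixing the size of the deformation region accordingly, is the main technical obstacle; the dynamical heart of the proof is the clean barrier argument of the second paragraph, where boundedness of $c$ is exactly what keeps the obstacle within reach of the collapsing curve.
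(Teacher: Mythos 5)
First, note that the paper does not prove this statement at all: it is quoted from V.~Bangert's paper \cite{B1981}, where it is established by variational methods (geodesics and totally convex sets), and the accompanying remark explicitly says that the Curve Shortening route you are following is carried out in \cite{Glas} and yields only a \emph{slightly weaker} result. So your approach is genuinely different from the paper's source. The dynamical core of your second paragraph is correct and is a nice observation: because the obstacle is an entire bounded ray rather than a compact sub-arc of a longer geodesic, every first-contact point can be placed in the \emph{interior} of a long geodesic segment whose endpoints are chosen off the contact set (after a small backward extension of $c$ at $t=0$), so the avoidance principle of Theorem~\ref{2seg} forbids the contact outright. This eliminates the need for the two-loop configuration that Lemma~\ref{2-loops} uses precisely to convert an endpoint contact into an interior contact. (Two small repairs: take for $\gamma_0$ a large Euclidean circle rather than $\partial B(p,R)$, since a metric sphere of a general complete metric need not be smooth or embedded; and handle the degenerate case where $\gamma_{\tilde t}$ contains a whole subray of $c$, which immediately exhibits a simple closed geodesic.)

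The genuine gap is exactly where you locate it, but your proposed fix does not close it. The estimate ``$l_t\le l_0$, hence $\mathrm{diam}(\gamma_t)\le l_0/2$, and since $\gamma_t$ encloses $K$ it stays in a fixed ball'' is not valid: a closed curve can enclose $K$ while lying entirely outside any prescribed compact set (a huge loop around $K$), and for a general complete metric on $\Br^2$ the $g$-length of such far-away enclosing loops need not be large. For instance, if outside a compact set the metric is a surface-of-revolution end whose girth stays bounded (or decreases), there are loops of uniformly bounded $g$-length enclosing $K$ at arbitrary distance, and the flow can drift down such an end --- never touching the ray, never converging in $\Br^2$, and possibly ``shrinking to a point at infinity.'' This is precisely the failure mode that the convexity-at-infinity hypothesis in Theorem~\ref{GO} excludes, and it is why Lemma~\ref{2-loops} traps $\gamma_t$ between four complete minimal axes --- barriers without endpoints, which Theorem~\ref{2seg} makes impassable --- a device available on the universal cover of $T^2$ but not for an arbitrary complete metric on $\Br^2$. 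Deforming $g$ to be Euclidean outside a large ball does not help unless you first prove the flow never reaches the deformation region, which is the very containment statement at issue; otherwise the limit geodesic may be a geodesic of the modified metric only. To complete the proof along these lines you would need an additional argument producing an \emph{outer} barrier (e.g.\ a complete geodesic line, or a quantitative isoperimetric/length lower bound for loops enclosing $K$ far from $K$), or else fall back on Bangert's variational proof.
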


\begin{remo}
We note that this result can be also obtained combining Curve Shortening with
arguments from topological dynamics. For more details about this approach
see~\cite{Glas} where a slightly weaker result has been obtained.
\end{remo}

\begin{cor} \label{bounded2}
Let $g$ be a Riemannian metric on $T^2$. Assume there exists a bounded geodesic ray $c$
on the universal covering $\Br^2$. Then the metric $g$ has positive topological entropy.
\end{cor}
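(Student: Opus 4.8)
The plan is to construct a contractible closed geodesic on $T^2$ out of the bounded ray and then invoke the Denvir--MacKay result, exactly as in the closing line of the proof of Lemma~\ref{2-loops}. Since $T^2$ is compact, the lifted metric $\tilde g$ on $\Br^2$ is complete, so $(\Br^2,\tilde g)$ together with the assumed bounded geodesic ray $c$ satisfies the hypotheses of Bangert's Theorem~\ref{bounded}. Applying that theorem yields a simple closed geodesic $\sigma$ on $\Br^2$.

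Because $\Br^2$ is simply connected, $\sigma$ is a null-homotopic loop, so it is a periodic lift (a closed curve on $\Br^2$, not an axis), and its projection $\pi\circ\sigma$ is a contractible closed geodesic on $T^2$. I would then feed this geodesic into Theorem~\ref{scc} to conclude $h_{top}(g)>0$, finishing the proof.

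The main obstacle is the hypothesis of Theorem~\ref{scc}, which requires the contractible closed geodesic on $T^2$ to be \emph{simple}. A curve that is embedded on $\Br^2$ need not project to an embedded curve on $T^2$: self-intersections of $\pi\circ\sigma$ correspond to intersections of $\sigma$ with its nontrivial translates $\tau\sigma$, $\tau\in\Bz^2\setminus\{0\}$, which are not a priori excluded where the curvature is positive. This is the same point that is passed over at the end of the proof of Lemma~\ref{2-loops}, and I would dispatch it in the same spirit. The cleanest route is by contradiction: assuming $h_{top}(g)=0$, Theorem~\ref{no-intersections} forbids self-intersecting lifts, and the curve-shortening argument behind Lemma~\ref{2-loops} rules out contractible closed geodesics on $T^2$ altogether, contradicting the geodesic $\pi\circ\sigma$ just produced. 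Alternatively one passes to an innermost translate among those $\tau\sigma$ meeting the disk bounded by $\sigma$, thereby extracting a genuinely simple closed contractible geodesic on $T^2$ to which Theorem~\ref{scc} applies verbatim.
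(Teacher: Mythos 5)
Your first two paragraphs are precisely the paper's (implicit) argument: the corollary carries no separate proof in the paper, being the immediate concatenation of Bangert's Theorem~\ref{bounded} with the Denvir--MacKay Theorem~\ref{scc}, exactly as at the end of the proof of Lemma~\ref{2-loops}. So on the main line you match the paper.

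The simplicity issue you raise is a genuine subtlety, and the paper passes over it silently both here and in Lemma~\ref{2-loops}; but be aware that neither of your proposed repairs actually works. Theorem~\ref{no-intersections} and Lemma~\ref{2-loops} concern \emph{transversal self-intersections of a single lift} $c:\Br\to\Br^2$. The lift of $\pi\circ\sigma$ through any of its points is the simple closed curve $\sigma$ itself, traversed periodically; it has no transversal self-intersection even when $\pi\circ\sigma$ is non-simple, because the double points of $\pi\circ\sigma$ arise from intersections $\sigma\cap\tau\sigma$ with $\tau$ a nontrivial deck transformation, which is not the configuration those results address. In particular, Lemma~\ref{2-loops} does not ``rule out contractible closed geodesics on $T^2$ altogether.'' The innermost-translate suggestion is also not well defined: once $\tau\sigma$ and $\tau'\sigma$ cross, no translate is innermost, and the boundary of the union of the disks they bound is only piecewise geodesic. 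To genuinely close the gap one must either invoke the Denvir--MacKay result in a form valid for non-simple contractible closed geodesics, or argue that the closed geodesic furnished by Theorem~\ref{bounded} can be chosen disjoint from all its nontrivial translates. As it stands, your proof is exactly as complete (and exactly as incomplete) as the paper's.
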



\section{Central Geometric Argument}

In this section we present a fundamental geometric constellation on the universal
covering of $T^2$ and prove that it implies positive topological entropy for the
Riemannian metric $g$ on $T^2$. In the further sections we will use this argument
several times.

\begin{dfn}
A continuous curve $c: I \to \Br^2$, for $I \subset \Br$, is called a broken geodesic
if there exists a finite set $W \subset I$ such that $c$ is geodesic on $I\setminus W$.
We call $W$ the set of vertices.
\end{dfn}

\begin{lemma}[Fundamental Lemma] \label{fundamentallemma}
Let $g$ be a Riemannian metric on $T^2$ and $\alpha: \Br \to \Br^2$ a
minimal axis of the translation element $\tau$. Let $c_1:[0, a] \to \Br^2$ and
$c_2:[0, b] \to \Br^2$ be two geodesic segments with endpoints on $\alpha$ and
$c_1((0, a)) \cap \alpha(\Br)=\emptyset$, $c_2((0, b)) \cap \alpha(\Br)=\emptyset$.
Assume that there exists a translation element $\eta$, with
$\eta\alpha(\Br) \cap \alpha(\Br)= \emptyset$ such that
$\eta \alpha(\Br) \tri c_1([0, a]) \not= \emptyset$ and
$\eta^{-1} \alpha(\Br) \tri c_2([0, b]) \not= \emptyset$. Then the metric $g$ has
positive topological entropy.
\end{lemma}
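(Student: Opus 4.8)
The plan is to argue by contradiction: assume $h_{top}(g)=0$ and produce, via the Curve Shortening flow, a nonconstant simple closed geodesic lying in a bounded region of $\Br^2$, which by Theorem~\ref{scc} (or Corollary~\ref{bounded2}) forces $h_{top}(g)>0$. The working assumption $h_{top}(g)=0$ is convenient because Theorem~\ref{no-intersections} then guarantees that $\alpha$, the extensions $\hat c_1\supset c_1$, $\hat c_2\supset c_2$, and all their translates are embedded lines, so the barriers used below are clean.

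First I would fix the geometric picture. Since $\alpha$ is a minimal axis it is an embedded line separating $\Br^2$ into two half planes $H^+$ and $H^-$. As covering translations commute, $\eta\alpha$ is again a minimal axis of $\tau$ parallel to $\alpha$; because $\eta\alpha\cap\alpha=\emptyset$ it lies entirely in one half plane, say $\eta\alpha\subset H^+$, so $\eta^{-1}\alpha\subset H^-$, and the translates $\eta^k\alpha$ form a monotone family of disjoint parallel lines. The conditions $\eta\alpha\tri c_1([0,a])\neq\emptyset$ and $\eta^{-1}\alpha\tri c_2([0,b])\neq\emptyset$ then say that the arc $c_1$, based on $\alpha$ with interior off $\alpha$, bumps across $\eta\alpha$ into the region beyond it, while $c_2$ symmetrically bumps across $\eta^{-1}\alpha$ on the other side. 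Applying $\eta$ to the second condition, $\eta c_2$ is a geodesic segment based on $\eta\alpha$ crossing $\alpha$ transversally, so inside the strip $R$ bounded by $\alpha$ and $\eta\alpha$ we have two interlocking geodesic arcs reaching past opposite boundary lines.

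Next I would set up two uses of Angenent's Theorem~\ref{2seg}. For boundedness I would trap the evolving curve exactly as in Lemma~\ref{2-loops}: choosing two non-equivalent primitive translation elements with minimal axes in transverse directions and intersecting the closed strips between suitably high translates produces a bounded box $\mathcal K\subset\Br^2$; taking the initial curve $\gamma_0$ inside $\mathcal K$ and disjoint from the bounding axes, Theorem~\ref{2seg} keeps $\gamma_t\subset\mathcal K$ for all $t$, so by Grayson's Theorem~\ref{GO} the flow either shrinks $\gamma_0$ to a point or subconverges to a closed geodesic. It therefore remains to choose $\gamma_0$ (a simple closed contractible curve enclosing the two bumps) so that it \emph{cannot} shrink to a point, using as stationary barriers the geodesics $\alpha$, $\eta\alpha$, $\eta^{-1}\alpha$ together with the geodesic segments $c_1$ and $\eta c_2$.

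The crux, and the step I expect to be the main obstacle, is this non-shrinking. The mechanism should mirror Lemma~\ref{2-loops}: the point is not any single crossing (a lone transversal intersection, or a single geodesic bigon between $c_1$ and $\alpha$, can be undone by the flow without violating Theorem~\ref{2seg}), but that the two crossing hypotheses, read simultaneously through $\eta$ and $\eta^{-1}$, pin down a compact geodesic barrier interior to the disk bounded by $\gamma_0$ whose endpoints coincide with interior points. A contraction of $\gamma_t$ to a point would have to sweep across this barrier, and at the first contact time the coincidence of endpoints with interior points forces an intersection to appear away from an endpoint, contradicting the endpoint-only creation of intersections in Theorem~\ref{2seg}. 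I would try to realize this directly by extracting from $c_1$, $c_2$ and the $\eta$-action a single lifted geodesic carrying two self-intersections --- if necessary invoking density of recurrent geodesics as in the proof of Theorem~\ref{no-intersections} --- and then quoting Lemma~\ref{2-loops} verbatim. Once the barrier is in place $\gamma_t$ cannot shrink, Theorem~\ref{GO} yields a nonconstant simple closed geodesic in $\mathcal K\subset\Br^2$, and Theorem~\ref{scc} gives the contradiction $h_{top}(g)>0$. Verifying that both hypotheses together with the minimality of $\alpha$ are exactly what obstructs the contraction, and organizing the competing barriers so that Angenent's principle delivers the contradiction rather than merely permitting the intersection count to drop to zero, is where the real work lies.
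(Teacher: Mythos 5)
Your proposal has a genuine gap at exactly the step you identify as the crux, and the gap is not repairable along the lines you sketch. The hypotheses of the Fundamental Lemma do not produce a lifted geodesic with two self-intersections: $c_1$ and $c_2$ are segments (possibly of different geodesics) whose endpoints lie on the minimal axis $\alpha$ and which cross translates $\eta^{\pm 1}\alpha$ transversally, and nothing forces either of them to self-intersect. Indeed the lemma is applied later (in the proofs of Theorem~I, Theorem~\ref{1g}, Theorem~III) precisely to geodesics that are already known to have \emph{no} self-intersections, so any reduction to Lemma~\ref{2-loops} would render it useless there. The barrier mechanism of Lemma~\ref{2-loops} also does not transfer: that argument needs a compact geodesic arc whose two endpoints coincide with interior points, so that Angenent's endpoint-only creation of intersections is violated at first contact. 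Here every candidate barrier ($c_1$, $\eta^{l+1}c_2$, or a broken geodesic built from them) has genuine free endpoints sitting on $\alpha$ or $\eta^{l+1}\alpha$, and a contractible curve evolving under Curve Shortening can slip past those endpoints and shrink to a point without ever violating Theorem~\ref{2seg}. So your route --- contradiction, one trapped contractible curve, one simple closed geodesic, Denvir--MacKay --- does not get off the ground.

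The paper's proof is structurally different and does not argue by contradiction, nor does it produce a contractible closed geodesic. After normalizing (choosing maximal $k,l$ with $\eta^k\alpha\tri c_1\neq\emptyset$, $\eta^{-l}\alpha\tri c_2\neq\emptyset$), it builds two \emph{intersecting} (broken) geodesic segments $\sigma_1,\sigma_2$ spanning the strip $S$ between $\alpha$ and $\eta^{l+1}\alpha$. Then, for each binary word $a(j)\in\{1,2\}^j$, it places the obstacle set $\bigcup_k\tau^{mk}\sigma_{a_k}$ in $S$, takes a $\tau^{mj}$-periodic curve avoiding it, and runs Curve Shortening on the cylinder $C_j=\Br^2/\langle\tau^{mj}\rangle$: the curve is \emph{non-contractible}, so it cannot shrink, stays clear of the obstacles by Theorem~\ref{2seg}, and Grayson's theorem yields a closed geodesic for each word. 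A ball $B(x,\varepsilon)\subset D_1\cap D_2$ separates the outcomes of different words, giving $2^j$ closed geodesics of length comparable to $j$; their initial vectors form an exponentially growing $(\phi,\varepsilon)$-separated set, and positive entropy follows directly from Bowen's definition on a finite cover. If you want to salvage your write-up, the two ideas you are missing are (i) using homotopy class on a cylinder, rather than an interior barrier, to prevent collapse, and (ii) the binary coding of obstacles that turns one intersection pattern into exponentially many distinguishable orbits.
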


\begin{proof}
We choose $k \geq 1$ such that
$$\eta^k \alpha(\Br) \tri c_1([0, a]) \not= \emptyset \quad \text{but} \quad
\eta^{k+1} \alpha(\Br) \tri c_1([0, a]) = \emptyset.$$ Analogously we choose
$l \geq 1$ such that $$\eta^{-l} \alpha(\Br) \tri c_2([0, b])\not= \emptyset \quad
\text{but} \quad \eta^{-(l+1)} \alpha(\Br) \cap c_2([0, b])= \emptyset.$$
W.l.o.g.\ let $l \geq k$. Then $$\alpha(\Br) \cap \eta^{l+1} c_2([0, b])
= \emptyset \quad \text{but} \quad
\eta \alpha(\Br) \tri \eta^{l+1} c_2([0, b]) \not= \emptyset.$$
\begin{figure}[h!]
\begin{center}
\psfrag{a}{$\alpha$}
\psfrag{ka}{$\eta^{k} \alpha$}
\psfrag{1a}{$\eta\alpha$}
\psfrag{l1a}{$\eta^{(l+1)} \alpha$}
\psfrag{c2}{$c_1$}
\psfrag{c1}{$\eta^{(l+1)} c_2$}
\includegraphics[scale=0.3]{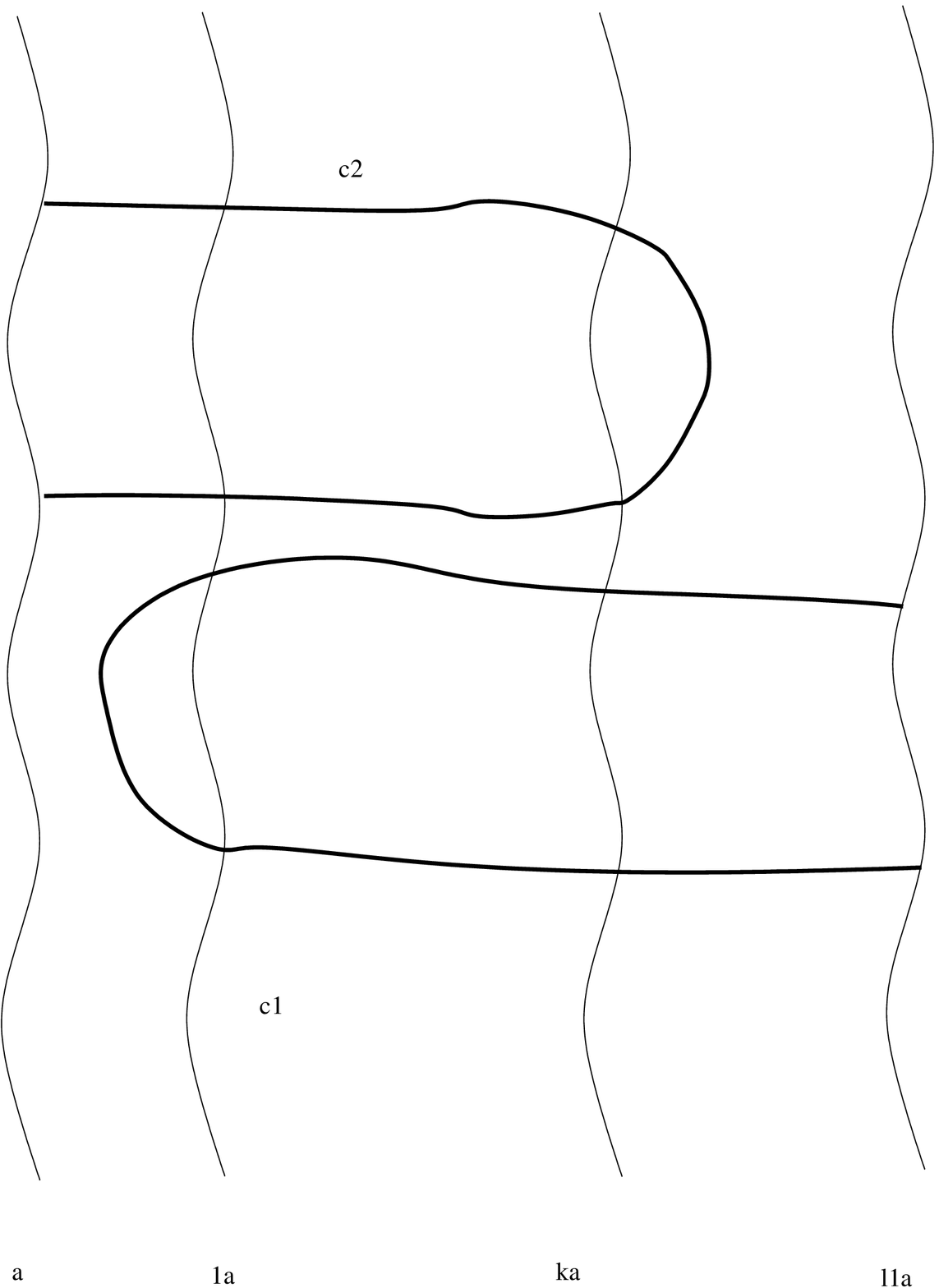}
\end{center}
\caption{Illustration of the translated segments $c_1$ and $\eta^{(l+1)} c_2$
in the proof of the Fundamental Lemma~\ref{fundamentallemma}.}
\end{figure}
By $S$ we denote the geodesic strip bounded by $\alpha$ and $\eta^{l+1} \alpha$.
In the next step we use the segments $c_1$ and $\eta^{l+1}c_2$ to construct
suitable broken geodesics $\sigma_1$ and $\sigma_2$ which intersect. We distinguish
two cases:
\begin{enumerate}
\item[Case 1)] There exists $\tilde n \in \Bz$ such that $\tau^{\tilde n} c_1$
and $\eta^{l+1}c_2$ intersect. In this case we will denote
$\sigma_1: = \tau^{\tilde n} c_1$ and $\sigma_2: = \eta^{l+1}c_2$.
\item[Case 2)]
For all $\tilde n \in \Bz$ the segments $\tau^{\tilde n} c_1$ and
$\eta^{l+1}c_2$ do not intersect. Using $c_1$ and $\eta \alpha$ we will construct
a broken geodesic segment $\sigma_1$ with endpoints on $\alpha$ which
intersects $\eta^{l+1}c_2$, see Figure~\ref{fig-ref4}.
\begin{figure}[h!]
\begin{center}
\psfrag{a}{$\alpha$}
\psfrag{c}{$c_1$}
\psfrag{tc}{$\tau^{n} c_1$}
\psfrag{ec}{$\eta^{l+1} c_2$}
\psfrag{ka}{$\eta^{k} \alpha$}
\psfrag{1a}{$\eta\alpha$}
\psfrag{l1a}{$\eta^{l+1} \alpha$}
\includegraphics[scale=0.3]{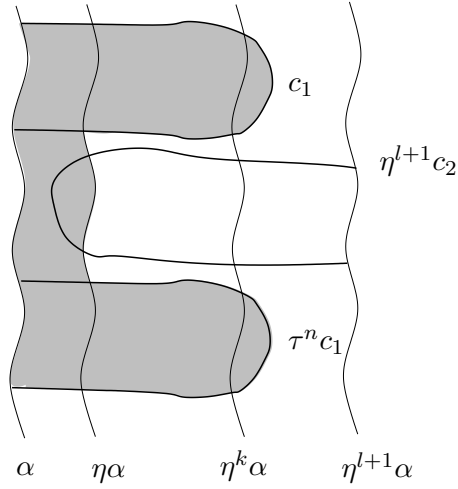}
\end{center}
\caption{The construction of intersecting broken geodesic segments $\sigma_1$
and $\sigma_2$ in the proof of the Fundamental Lemma~\ref{fundamentallemma}.} \label{fig-ref4}
\end{figure}
We choose $n \in \Bz$
such that on $\eta \alpha(\Br)$ the set of intersections
$\eta^{l+1}c_2 \tri \eta \alpha(\Br)$ lies
between the finite sets
$M_+=\tau^n c_1 \tri \eta \alpha(\Br)$ and
$M_-= c_1 \tri \eta \alpha(\Br)$.\\
Let $\eta \alpha([t_1,t_2])$ be the smallest segment of $\eta \alpha$
such that
$$\eta \alpha([t_1,t_2]) \cap M_+ \not= \emptyset
\not= \eta \alpha([t_1,t_2]) \cap M_-.$$
Let $A$ be the union of the bounded connected components of
$$S \setminus \left(c_1([0, a])
 \cup \tau^n c_1([0, a])
 \cup \eta \alpha([t_1,t_2]) \right).$$
We consider the broken geodesic $\sigma_1([0, \tilde a])$ endowed
with a new para\-metrization consisting of $\eta \alpha([t_1,t_2])$
and segments of $\tau^n c_1$ and $c_1$, such that it separates
$\bar A$ from $S \setminus \bar A$. By construction $\sigma_1([0,\tilde a])$
and $\sigma_2([0,b]) = \eta^{l+1} c_2([0, b])$ intersect.
\end{enumerate}
Let $m \in \mathbb{N}$ be large enough such that
\begin{eqnarray} \label{m2}
(\sigma_1 \cup \sigma_2) \cap \tau^{mi}(\sigma_1 \cup \sigma_2) = \emptyset
\quad \text{for all $i \in \Bz \setminus\{0\}$.}
\end{eqnarray}
For fixed $j \in \Bn$ and a finite sequence of $j$ elements
$(a_0,...,a_{j-1})=a(j)$ with $a_k \in \{1,2 \}$ we consider
the set
$$
A(a(j)):= \bigcup\limits_{k =0}^{j-1} \tau^{mk} \sigma_{a_k}\enspace.
$$
For each $a(j)$ let $\gamma_0^{a(j)} : \mathbb{R} \to S \setminus A(a(j))$
be a smooth curve such that $\tau^{mj}\gamma_0^{a(j)}=\gamma_0^{a(j)}$,
see Figure \ref{fig-ref5}.
\begin{figure}[h]
\begin{center}
\psfrag{S}{$S$}
\psfrag{c1}{$\tau^m \sigma_1$}
\psfrag{c2}{$\tau^m \sigma_2$}
\psfrag{tc1}{$\tau^{2m}\sigma_1$}
\psfrag{tc2}{$\tau^{2m} \sigma_2$}
\psfrag{ttc1}{$\tau^{3m} \sigma_1$}
\psfrag{tal}{$\alpha$}
\psfrag{ta}[c]{$\eta^{l+1} \alpha$}
\psfrag{t2c2}{$\tau^{3m} \sigma_2$}
\psfrag{g0}{$\gamma_0^{(a)}$}
\psfrag{1}{$\gamma_0^{(a)}$}
\psfrag{a}{$\alpha$}
\psfrag{C}[c]{$C_3$}
\psfrag{ga}{$\gamma_0$}
\includegraphics[scale=0.24]{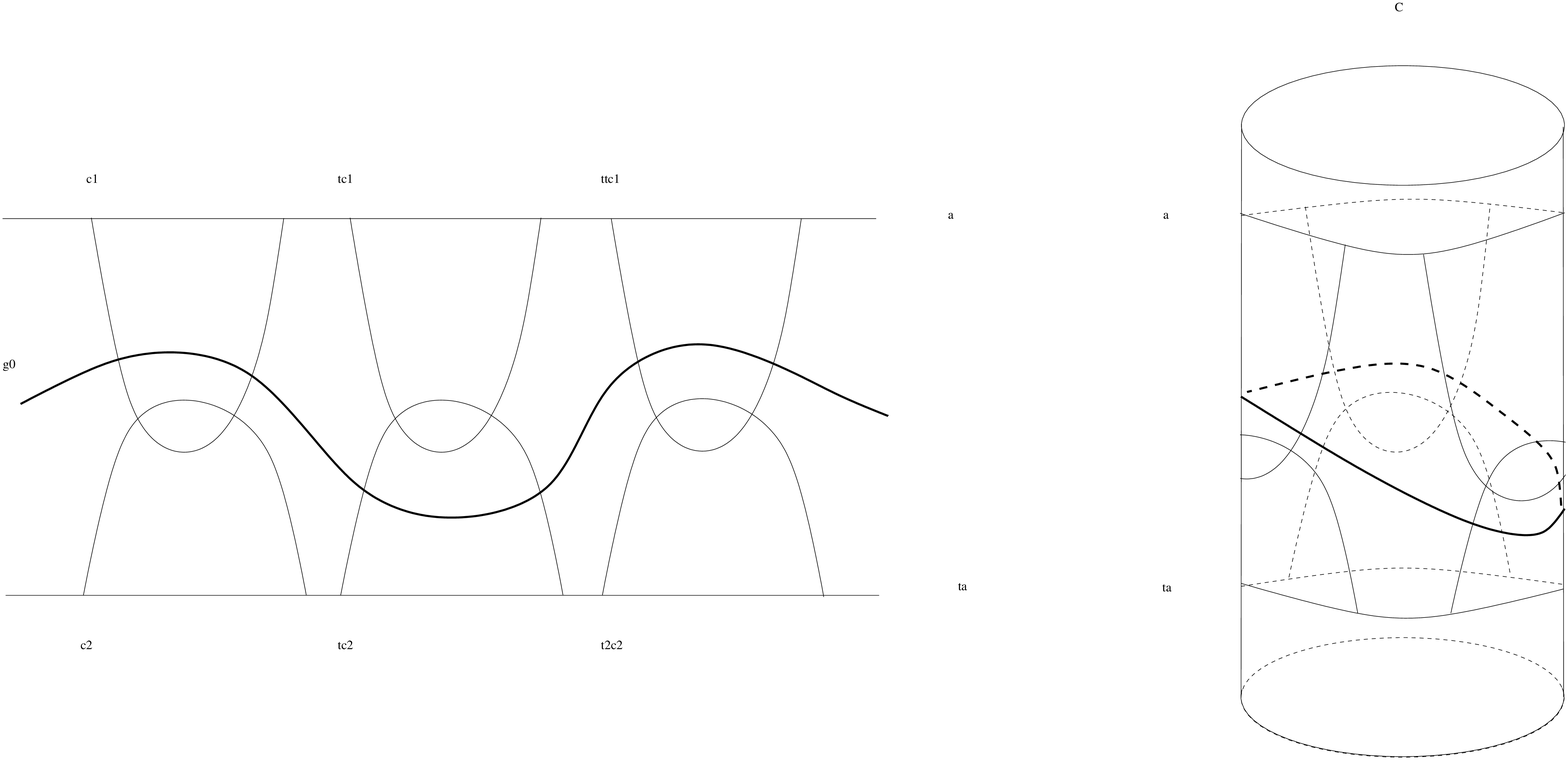}
\end{center}
\caption{Simplified illustration of the translates of $\sigma_1$,
$\sigma_2$, and $\gamma_0^{(a)}$ in the proof of the Fundamental
Lemma~\ref{fundamentallemma}.} \label{fig-ref5}
\end{figure}
Furthermore, we can choose these curves such that the length of
the segment of $\gamma_0^{a(j)}$ joining  $\gamma_0^{a(j)}(0)$
and $\tau^{jm}\gamma_0^{a(j)}(0)$ is less than $jb_2$ and larger than
$jb_1$ for some universal constants $0<b_1 < b_2$. Consider for each
$j \in \Bn$ the cylinder
$$
C_j: = \Br/\langle \tau^{mj} \rangle
$$
where $\langle \tau^{jm} \rangle$ is the subgroup of $\Bz^2$ generated
by $\tau^{jm}$. In particular, the projection of $S$ onto $C_j$ defines
an annulus. By assumption the curves $\gamma_0^{(a(j))}$ project to closed
curves $\gamma_0^{(a(j))}: \mathbb{R} \to C_j \setminus  B(a(j))$, where
$B(a(j))$ is the projection of $A(a(j))$ onto $C_j$. By construction the
interior angles at the vertices $w_i$ of the connected component of
$C_j \setminus  B(a(j))$ containing $\gamma_0^{(a(j))}$ are all less than $\pi$.
Applying the curve shortening flow to the smooth curve $\gamma_0^{(a(j))}$
we obtain smooth curves $\gamma_t^{a(j)}: \mathbb{R} \to C_j \setminus  B(a(j))$
which stay in $C_j$ and do not intersect $ B(a(j))$. This follows from
Theorem~\ref{2seg} and the fact that due to their homotopy class $\gamma_t^{a(j)}$
never become singular. Since the curves $\gamma_0^{a(j)}$ are not contractible,
M.~A.~Grayson's Theorem~\ref{GO} implies that the curvature of $\gamma_t^{a(j)}$
converges to zero. This yields the existence of a closed geodesic on $C_j$.  \\
Let $D_i$ be the bounded connected component of $S \setminus \sigma_i$ for
$i \in \{1,2\}$. Let $B(x, \varepsilon) \subset D_1 \cap D_2$ be a geodesic
ball with radius $\varepsilon>0$. Obviously, by construction the curves
$\gamma_0^{a(j)}$ do not intersect $\tau^{mk} B(x, \varepsilon)$ for
$0 \leq k \leq j-1$ for all sequences $a(j)$ and $\gamma_t^{a(j)}$ will not
intersect $\tau^{mk} B(x, \varepsilon)$ for all $0 \leq k \leq j-1$ and
$t \in (0, T)$ by the properties of the Curve Shortening flow.\\
By this construction for fixed $j$ and different sequences $a(j)$ we get $2^j$
different closed geodesics on $C_j$ of length between $b_1j$ and $b_2j$.
We project the constructed closed geodesics onto $C_1$. Identifying $\alpha$
and $\eta^{l+1} \alpha$ we get a torus $\tilde T^2$ on which we still have at
least $\frac{2^j}{j}$ different closed geodesics. By construction the initial
conditions of these closed geodesics form a $(\phi, \varepsilon)$-separated set
which grows exponentially for $j \to \infty$. By Definition~\ref{topent} this implies
positive topological entropy for the Riemannian metric on $\tilde T^2$.
As $\tilde T^2$ is a finite cover of $T^2$ also the Riemannian metric $g$ on
$T^2$ has positive topological entropy.
\end{proof}

\begin{remo}
For the proof of the Fundamental Lemma it is not necessary that $\alpha$ is minimal.
It suffices the condition $\alpha \cap \eta^{k+1} \alpha= \emptyset$ for the
special $k\in \Bz$ used in the proof.
\end{remo}

\begin{cor} \label{closed}
Let $(T^2,g)$ be a Riemannian torus. The existence of a contractible closed
geodesic on $T^2$ implies the existence of non-primitive axes for all translation
elements $\tau$.
\end{cor}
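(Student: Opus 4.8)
The plan is to construct, for an arbitrary primitive translation element $\tau$, a single axis of $\tau^2$ whose image is not $\tau$-invariant. By the definition of a non-primitive axis this is exactly what is required, and since every translation element is a power of a primitive one, it suffices to treat primitive $\tau$. I would work throughout on the cylinder $\hat C:=\Br^2/\langle\tau^2\rangle$ with the lifted metric. It carries the isometric involution $\bar\tau$ induced by $\tau$, and since $\tau$ is a Euclidean translation in the direction $\delta(\tau)$, the map $\bar\tau$ shifts $\hat C$ by half a turn along its core circle while preserving the transverse Euclidean coordinate. The key dictionary between the two pictures is: an embedded non-contractible closed geodesic on $\hat C$ is exactly the projection of an axis $A$ of $\tau^2$ on $\Br^2$, and it is $\bar\tau$-invariant if and only if $\tau A(\Br)=A(\Br)$. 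Thus it suffices to produce on $\hat C$ a non-contractible embedded closed geodesic that is \emph{not} $\bar\tau$-invariant.

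First I would use the hypothesis to manufacture obstacles. Assuming the contractible closed geodesic is simple, its lift to $\hat C$ is a contractible closed geodesic whose full preimage consists of two disjoint smooth closed geodesics $\hat\gamma_1$ and $\hat\gamma_2=\bar\tau\hat\gamma_1$, bounding disjoint disks $D_1$ and $D_2=\bar\tau D_1$ situated at the same transverse height but half a turn apart. Being geodesics, $\hat\gamma_1$ and $\hat\gamma_2$ are stationary solutions of the Curve Shortening flow, and the decisive feature is that $\bar\tau$ interchanges them. (If the contractible geodesic is not simple, I would first pass to an innermost self-intersection loop, which bounds an embedded disk and can be converted into a Curve Shortening barrier by rounding its corner; this reduction is addressed at the end.)

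Next I would take an embedded non-contractible simple closed curve $\beta_0\subset\hat C$ that weaves through the obstacle region, passing on the upper side of $D_1$ and on the lower side of $D_2$. Its free isotopy class in $\hat C\setminus(D_1\cup D_2)$ is then not preserved by $\bar\tau$, since $\bar\tau$ exchanges the two prescriptions. To keep the flow in a compact region I would trap $\beta_0$ between two further stationary barriers placed far above and far below the obstacles, for instance two transverse $\Bz^2$-translates of a fixed minimal axis of $\tau$. Running the Curve Shortening flow and invoking Theorem~\ref{2seg} against all four stationary barriers, the curves $\beta_t$ stay embedded, remain disjoint from the open disks $D_1,D_2$, and are confined to the compact annulus between the two outer barriers; by M.~A.~Grayson's Theorem~\ref{GO} a non-contractible curve cannot shrink to a point, so a subsequence converges to a non-contractible closed geodesic $\beta_\infty$.

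The conclusion is then immediate. Since $\beta_\infty$ is non-contractible whereas $\hat\gamma_1,\hat\gamma_2$ are contractible, $\beta_\infty$ can be tangent to neither, so by Theorem~\ref{2seg} it stays strictly disjoint from both obstacles and on the side prescribed by $\beta_0$: strictly above $\hat\gamma_1$ and strictly below $\hat\gamma_2$. As $\bar\tau$ interchanges the two obstacles, this forces $\beta_\infty\neq\bar\tau\beta_\infty$, so in particular $\beta_\infty$ is none of the $\bar\tau$-invariant barriers. Lifting $\beta_\infty$ to $\Br^2$ yields an axis $A$ of $\tau^2$ with $\tau A(\Br)\neq A(\Br)$, a non-primitive axis of $\tau$; as $\tau$ was an arbitrary primitive element, the corollary follows. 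The step I expect to require the most care is the control of the flow itself: one must confine the once-around curves $\beta_t$ to a compact annulus so that Theorem~\ref{GO} applies and yields a non-contractible limit geodesic rather than a curve escaping to an end of $\hat C$, which is precisely the role of the two outer minimal-axis barriers together with Theorem~\ref{2seg}. A secondary technical point is the reduction of a possibly non-simple contractible closed geodesic to an embedded Curve Shortening barrier via an innermost loop, or alternatively the isolation of the simple case through Theorem~\ref{scc}.
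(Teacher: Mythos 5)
This is essentially the paper's own argument: the proof given there consists of rerunning the construction of the Fundamental Lemma~\ref{fundamentallemma} from~(\ref{m2}) onward with translates of the contractible closed geodesic playing the role of the obstacles $\sigma_1,\sigma_2$, which is precisely your cylinder-with-obstacles scheme (your case is the sequence of length $j=2$ with opposite choices of side). Two minor repairs: you should work on $\Br^2/\langle\tau^{2m}\rangle$ with $m$ chosen as in~(\ref{m2}), using $\tilde\gamma$ and $\tau^m\tilde\gamma$ as the two obstacles, since nothing guarantees $\tilde\gamma\cap\tau\tilde\gamma=\emptyset$ and without disjointness your above/below bookkeeping and the component argument break; the resulting axis of $\tau^{2m}$ that is not $\tau^m$-invariant is a fortiori not $\tau$-invariant, so the conclusion is unaffected. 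Also, your innermost-loop-plus-rounding reduction for a non-simple contractible geodesic is both unnecessary and slightly off (rounding the corner destroys stationarity): an immersed closed geodesic is already a stationary solution of Curve Shortening with empty boundary, so Theorem~\ref{2seg} keeps an initially disjoint evolving closed curve disjoint from it, and no embeddedness of the obstacle is needed.
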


\begin{proof}
Considering an arbitrary translation element and replacing in the proof of
the Fundamental Lemma~\ref{fundamentallemma} the segments $\sigma_1$ and
$\sigma_2$ by a closed geodesic we can use a similar construction, as presented
starting in (\ref{m2}), to produce a non-primitive axis.
\end{proof}

Using the recurrence of the geodesic flow on $T^2$ we obtain the following
refinement of the Fundamental Lemma.

\begin{theorem}\label{1g}
Let $(T^2,g)$ be a Riemannian torus. Let $c:\Br \to \Br^2$ be a geodesic and
$\alpha$ a minimal axis such that $c(a), c(b) \in \alpha (\Br)$ for $a < b$.
If there exists a translation element $\tau$ with $\tau(\alpha( \Br)) \cap \alpha( \Br)= \emptyset$
and $\tau(\alpha( \Br)) \tri c( [a,b])\not= \emptyset$, then $g$ has positive
topological entropy.
\end{theorem}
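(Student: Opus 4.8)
The plan is to reduce the hypothesis to the configuration of the Fundamental Lemma~\ref{fundamentallemma} with $\eta=\tau$, and to produce the one piece of data the hypothesis does not directly give — a segment crossing $\tau^{-1}\alpha$ — by exploiting recurrence. First I would reduce to the case that $c$ is recurrent, just as in the proof of Theorem~\ref{no-intersections}: recurrent geodesics are dense in $ST^2$, the transversal crossing $\tau\alpha(\Br)\tri c([a,b])$ persists under small perturbations of the initial vector, and the two incidences $c(a),c(b)\in\alpha(\Br)$ are, after a small shift of the endpoints, also persistent because $c$ meets $\alpha$ transversally. Hence a nearby recurrent geodesic still has the same configuration (with slightly shifted $a,b$), and since positive entropy for it forces positive entropy for $g$, I may assume $c$ is recurrent.

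Next I extract the first Fundamental--Lemma segment. As $c$ and $\alpha$ are distinct geodesics, their intersections on $[a,b]$ are transversal and finite; taking the two consecutive $\alpha$-crossings that bracket the given intersection point with $\tau\alpha$ gives a subsegment $c_1=c([a',b'])$ with $c(a'),c(b')\in\alpha(\Br)$, $c((a',b'))\cap\alpha(\Br)=\emptyset$, and $\tau\alpha(\Br)\tri c_1\neq\emptyset$. Since $\alpha$ separates $\Br^2$ and $c_1$ meets $\alpha$ only at its endpoints, $c_1$ is an excursion lying entirely on the $\tau$-side of $\alpha$. This is exactly the segment crossing $\eta\alpha$ for $\eta=\tau$.

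The heart of the argument, and the step I expect to be the main obstacle, is manufacturing the second segment $c_2$ with endpoints on $\alpha(\Br)$ and $\tau^{-1}\alpha(\Br)\tri c_2\neq\emptyset$, i.e. an excursion reaching across to the opposite side of $\alpha$. The structural difficulty is that no covering translate of $c_1$ can serve: any $g\in\Bz^2$ fixing $\alpha(\Br)$ lies in the cyclic group of translations along $\alpha$ and, since $\Bz^2$ is abelian, commutes with $\tau$, so $gc_1$ again crosses $\tau\alpha$ — translating never flips the side. Thus $c_2$ must be a genuinely new excursion of the bi-infinite geodesic, and recurrence must supply it. Projecting to $T^2$, the minimal axis $\alpha$ descends to a simple closed geodesic $\bar\alpha$ onto which every line $\tau^{j}\alpha$ projects; recurrence then forces $c$ to meet the family $\{\tau^{j}\alpha\}_{j\in\Bz}$ at infinitely many parameters in both time directions, and on $\Br^2$ successive crossings of $c$ with the single line $\alpha$ alternate in co-orientation. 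Consequently, just past the downward crossing $c(b')$ the geodesic enters the $\tau^{-1}$-side, and recurrence of the upward crossing vector $\dot c(a')$ produces arbitrarily late upward crossings of $\alpha$; between a downward and the next upward crossing $c$ performs an excursion into the $\tau^{-1}$-side, the candidate for $c_2$. The remaining — and most delicate — point is \emph{depth}: one must ensure some such opposite-side excursion actually reaches beyond $\tau^{-1}\alpha$ rather than merely dipping below $\alpha$. I would attempt to force this by a dichotomy: if every opposite-side excursion stayed inside the strip bounded by $\tau^{-1}\alpha$ and $\alpha$, the recurrent geodesic would oscillate within a strip of finite Euclidean width, which — using the minimality of $\alpha$ together with Corollary~\ref{bounded2} on bounded rays — I expect to rule out; otherwise some excursion crosses $\tau^{-1}\alpha$ and furnishes $c_2$.

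With $c_1$ and $c_2$ so constructed and $\eta=\tau$ satisfying $\tau\alpha(\Br)\cap\alpha(\Br)=\emptyset$, the Fundamental Lemma~\ref{fundamentallemma} applies verbatim and yields positive topological entropy, completing the argument.
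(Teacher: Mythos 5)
Your overall skeleton --- reduce to a recurrent geodesic by density, extract from $c([a,b])$ an excursion $c_1$ with endpoints on $\alpha$ crossing $\tau\alpha$, and then use recurrence to manufacture the missing opposite-side excursion for the Fundamental Lemma~\ref{fundamentallemma} --- matches the paper's, and your observation that no deck translate of $c_1$ can ever flip sides correctly identifies the crux. But the step where you actually produce $c_2$ has two genuine gaps. First, recurrence of a crossing vector such as $\dot c(a')$ only returns $c$ to \emph{translated copies} $\sigma_n\alpha$ of $\alpha$, not to the line $\alpha$ itself; so you have not shown that $c$ ever meets $\alpha$ again after $b$, and the downward excursions with both endpoints on $\alpha(\Br)$ from which you want to select $c_2$ may not exist at all. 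Second, even granting such excursions, your depth dichotomy fails: if every downward excursion stays inside the strip between $\tau^{-1}\alpha$ and $\alpha$, the ray $c|_{[b,\infty)}$ is merely confined to a Euclidean strip of finite width and can still escape to infinity in the direction of $\alpha$; this is neither bounded nor oscillating, so Corollary~\ref{bounded2} does not apply and no contradiction arises. (Indeed, by Theorem~I confinement to such a strip is exactly the expected behaviour of geodesics when $h_{top}(g)=0$, so this case cannot be excluded.)

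The paper closes the argument differently, and the difference is exactly what repairs these gaps: it first reduces to the case $c([b,\infty))\cap\tau\alpha(\Br)=\emptyset$ (otherwise the Fundamental Lemma applies at once), and then applies recurrence to the vector $\dot c(t_0)$ at a point of $c([a,b])$ lying beyond $\tau\alpha$. This yields times $t_n\to\infty$ and deck transformations $\tau_n$ with $D\tau_n\dot c(t_n)\to\dot c(t_0)$, so near $t_n$ the geodesic reproduces the entire configuration translated by the covering group, and $c(t_n)$ lies beyond the translated pair of axes $\tau_n\alpha(\Br)$ and $\tau_n\tau\alpha(\Br)$; the standing assumption that $c([b,\infty))$ stays on the $\alpha$-side of $\tau\alpha$ pins down where these translated axes sit, and the connecting segment $c([b,t_n])$, running from $\alpha$ to a point beyond both translated axes, supplies excursions on \emph{both} sides of the translated axis. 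The Fundamental Lemma is then applied with that translated axis as base, not with $\alpha$. If you want to salvage your write-up, the fix is precisely this: do not insist on realizing $c_2$ relative to the original axis $\alpha$ and its translate $\tau^{-1}\alpha$; let recurrence of $\dot c(t_0)$ choose the base axis for you.
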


\begin{proof}
Let $c:\Br \to \Br^2$ be a geodesic and $\alpha$ a minimal axis such that
$c(a), c(b) \in \alpha (\Br)$ for $a < b$ and $\tau$ the translation element
with $\tau(\alpha( \Br)) \cap \alpha( \Br)= \emptyset$  and
$\tau(\alpha( \Br)) \tri c( [a,b])\not= \emptyset$.
We will now prove that this implies positive topological entropy.\\
Consider the two halfplanes given by the connected components of
$ \Br^2 \setminus \tau(\alpha( \Br))$ and denote by $A$ the one containing
$\alpha(\Br)$ and by $A'$ the other one. By assumption there exists $t_0 \in [a,b]$
such that  $c(t_0) \in A'$.
\\
We can assume that $c[b, \infty)$ does not intersect $\tau(\alpha(\Br))$ since
this would immediately imply positive topological entropy by the Fundamental
Lemma~\ref{fundamentallemma}. We can also assume that $c$ is recurrent, since
the recurrent geodesics are dense. In particular,
there exists an increasing sequence $(t_n)_{n \in \Bn}$ and a sequence $\tau_n$
of translates such that $D\tau_n \dot c(t_n) \to \dot c(t_0)$ for $n \to \infty$.
But then for $n$ large enough $c([b, t_n])$ must intersect the translates
$\tau_n \tau \alpha( \Br) $ and  $\tau_n \alpha( \Br) $. Then, $c([b, t_n])$
fulfills the assumption of the Fundamental Lemma~\ref{fundamentallemma} and
hence, $g$ has positive topological entropy and there exist non-primitive axes
of $\tau$.
\end{proof}

\begin{cor} \label{2g}
Let $(T^2,g)$ be a Riemannian torus.
Let $c:[0 , \infty) \to \Br^2$ be a geodesic ray on the universal covering
$\Br^2$ which intersects a minimal axis $\alpha$ and disjoint translate
infinitely often. Then $g$ has positive topological entropy.
\end{cor}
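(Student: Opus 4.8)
The plan is to reduce the statement directly to Theorem~\ref{1g}, which already produces positive topological entropy from a \emph{single} suitably placed transversal intersection. Denote by $\alpha$ the minimal axis and let $\tau$ be the translation element for which $\tau\alpha(\Br) \cap \alpha(\Br) = \emptyset$; by hypothesis $c$ meets both $\alpha(\Br)$ and $\tau\alpha(\Br)$ transversally infinitely often, so the only real task is to extract an interval $[a,b]$ fitting the hypotheses of Theorem~\ref{1g}.

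First I would record that $c$ coincides, as an unoriented geodesic, with neither $\alpha$ nor $\tau\alpha$. Indeed, two geodesics that are not identical can only meet transversally, since a tangential contact would force equal position and equal (up to sign) velocity and hence global coincidence; thus their intersection set is discrete. A geodesic equal to $\alpha$ could not cross the disjoint line $\tau\alpha$ at all, and symmetrically a geodesic equal to $\tau\alpha$ could not cross $\alpha$, so both degenerate cases are excluded. Consequently the parameter values at which $c$ crosses $\alpha(\Br)$, respectively $\tau\alpha(\Br)$, form infinite discrete subsets of $[0,\infty)$; being infinite and discrete, both sets are unbounded.

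Now I would simply interleave the crossing times. Pick any $a \in [0,\infty)$ with $c(a) \in \alpha(\Br)$. Using that the $\tau\alpha$-crossing times are unbounded, pick $t_0 > a$ with $c(t_0) \in \tau\alpha(\Br)$, the intersection transversal; using that the $\alpha$-crossing times are unbounded, pick $b > t_0$ with $c(b) \in \alpha(\Br)$. Then $a < t_0 < b$, so $c(t_0) \in \tau\alpha(\Br) \tri c([a,b])$ and hence $\tau\alpha(\Br) \tri c([a,b]) \neq \emptyset$. Extending the ray $c$ to a complete geodesic $c:\Br \to \Br^2$ (which leaves $c|_{[a,b]}$ unchanged), the data $(\alpha,\tau,[a,b])$ satisfy precisely the hypotheses of Theorem~\ref{1g}: $c(a),c(b) \in \alpha(\Br)$ with $a<b$, $\tau\alpha(\Br)\cap\alpha(\Br)=\emptyset$, and $\tau\alpha(\Br)\tri c([a,b])\neq\emptyset$. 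Theorem~\ref{1g} then yields positive topological entropy.

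I expect no genuine obstacle here: all the dynamical content is already contained in Theorem~\ref{1g} (and, beneath it, the Fundamental Lemma~\ref{fundamentallemma} together with the recurrence theorem). The only points needing a line of care are that $c$ be distinct from both axes, so that its crossing times are discrete and therefore unbounded once infinite, and the elementary ordering $a<t_0<b$ of crossing times, which is exactly where the ``infinitely often'' hypothesis is used: a single $\tau\alpha$-crossing would not guarantee an $\alpha$-crossing on each side of it.
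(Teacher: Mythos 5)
Your proof is correct and takes essentially the same route the paper intends: the corollary is stated without its own proof as an immediate consequence of Theorem~\ref{1g}, and your interleaving of crossing times ($a<t_0<b$ with $c(a),c(b)\in\alpha(\Br)$ and a transversal crossing of the disjoint translate at $t_0$) is exactly that routine reduction. The one small point worth making explicit is that the unboundedness of the crossing-time sets uses that they are closed (preimages of the closed sets $\alpha(\Br)$ and $\tau\alpha(\Br)$) in addition to being discrete, which is immediate here.
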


\begin{lemma} \label{osc}
Let $g$ be a Riemannian metric on $T^2$. Then the existence of an
oscillating geodesic ray $c$ on $\Br^2$ implies positive topological
entropy for the metric $g$.
\end{lemma}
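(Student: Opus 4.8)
The plan is to reduce the oscillation hypothesis to the crossing constellation of Corollary~\ref{2g} (equivalently, to a single instance of Theorem~\ref{1g}), and to dispose of the degenerate alternatives by Corollary~\ref{bounded2}. First I would record what oscillation gives quantitatively: since $c$ is not escaping, $L:=\liminf_{t\to\infty}\|c(t)\|<\infty$, so there is a sequence $t_n\to\infty$ with $c(t_n)\in \overline{B(0,L+1)}$; since $c$ is not bounded, $\limsup_{t\to\infty}\|c(t)\|=\infty$. Thus $c$ returns to a fixed compact disk infinitely often while also leaving every disk. I would then fix two non-equivalent primitive translation elements $\tau,\eta$ with minimal axes $\alpha_\tau,\alpha_\eta$ of independent asymptotic directions $\delta(\tau),\delta(\eta)$, and consider the two families of translates $\{\sigma\alpha_\tau\}$ and $\{\sigma\alpha_\eta\}$, with $\sigma$ ranging over $\Bz^2$. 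Within each class the translates are pairwise disjoint or equal, since minimal geodesics of a fixed homotopy class do not cross, so together they cut $\Br^2$ into a grid of cells of uniformly bounded diameter.

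The heart of the argument is to show that $c$ crosses some translate $A$ of $\alpha_\tau$ or of $\alpha_\eta$ infinitely often. Suppose not: then for every translate in both families $c$ eventually stays in one of its two complementary half-planes. Because the translates of a fixed class are linearly ordered by their index, forming nested strips, the resulting ``side function'' is monotone in the index; hence in each family there is a threshold, and eventually $c$ lies above every translate of index below the threshold and below every translate of index above it. If a threshold equals $\pm\infty$ in some family, the corresponding coordinate of $c$ tends monotonically to infinity, so $\|c(t)\|\to\infty$ and $c$ escapes, contradicting oscillation. If both thresholds are finite, $c$ is eventually confined to a single grid cell, hence bounded; this again contradicts oscillation (and would in any case yield positive entropy by Corollary~\ref{bounded2}). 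Either way we reach a contradiction, so an infinitely crossed axis $A$ must exist.

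Finally I would turn the infinitely many crossings of $A$ into the constellation of Theorem~\ref{1g}. Let $s_1<s_2<\cdots\to\infty$ be crossing times, so $c(s_i)\in A(\Br)$. If $c$ remained within the bounded-width strip between $A$ and a fixed disjoint translate $A'=\sigma A$ for all large $t$, then, being unbounded, it would have to run off to infinity along $A$, forcing $\|c(t)\|\to\infty$ and hence escape, which is excluded. Therefore some arc $c([s_i,s_j])$ leaves this strip and meets $A'$ transversally. Now $c(s_i),c(s_j)\in A(\Br)$, the translate $A'$ is disjoint from $A$, and $A'\tri c([s_i,s_j])\neq\emptyset$: this is exactly the hypothesis of Theorem~\ref{1g}, so $g$ has positive topological entropy. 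The main obstacle is the dichotomy of the middle paragraph, namely making rigorous that ``$c$ eventually lies on one side of every axis'' forces either a monotone escape or confinement to a single cell. This rests on the order structure and pairwise disjointness of minimal geodesics of a fixed class, and on controlling the two transverse families simultaneously so that bounded height together with bounded extent along the axis genuinely yields a bounded ray.
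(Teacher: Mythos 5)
Your reduction to Corollary~\ref{2g}/Theorem~\ref{1g} is the right target, and your middle paragraph (the threshold/monotonicity argument showing that \emph{some} translate $A$ must be crossed infinitely often, on pain of escape or boundedness) is essentially sound. The gap is in the final step. You claim that if $c$ eventually remained in the bounded-width strip between $A$ and a disjoint translate $A'$, then unboundedness would force it to ``run off to infinity along $A$'', hence $\|c(t)\|\to\infty$. This inference is false: a ray confined to an unbounded strip with $\limsup\|c(t)\|=\infty$ can travel far along the strip and return, i.e.\ oscillate \emph{inside} the strip — which is exactly the configuration the lemma has to rule out, so you cannot dismiss it as ``escape''. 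There is a second, smaller non sequitur in the same place: even granting that $c$ does not stay in the strip between $A$ and $A'$, it can leave that strip by crossing $A$ to the far side (which it does infinitely often by hypothesis), so ``leaves the strip'' does not yield an arc with endpoints on $A$ meeting $A'$. As written, you never actually produce the second, disjoint, \emph{parallel} translate that Theorem~\ref{1g} requires, and your own closing sentence concedes that this is the unresolved obstacle.

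The repair is to bring in the transverse direction at the point of failure rather than at the start. The paper does this directly: pass to a subsequence so that $c(t_n)/\|c(t_n)\|\to z\in S^1$, pick a minimal axis $\alpha$ with $\pm\delta(\alpha)\neq z$, and choose $\alpha$ together with a disjoint translate $\alpha'$ so that the compact set $K$ (visited by $c(s_n)$) and the tail $\{c(t_n)\mid n\geq n_0\}$ lie in the two different unbounded components of $\Br^2\setminus(\alpha(\Br)\cup\alpha'(\Br))$. Since $c$ shuttles between these two regions infinitely often, it crosses both $\alpha$ and $\alpha'$ infinitely often, and Corollary~\ref{2g} applies at once. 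Alternatively, your own dichotomy can be iterated: once $c$ is trapped between two neighbouring translates of the first family, run the same threshold argument on the second family; the only surviving case is that two disjoint translates of one family are each crossed infinitely often. Either patch closes the argument, but some such use of the second, transverse family at this stage is indispensable.
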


\begin{proof}
Due to the definition of an oscillating geodesic ray there exist sequences
$t_n, s_n$ tending to infinity and a compact set $K \subset \Br^2$ such that
$c(s_n) \in K$ and $\|c(t_n)\| \to \infty$ for $n \to \infty$. W.l.o.g.\ we can
assume that
$$
\lim\limits_{n \to \infty} \frac{c(t_n)}{\|c(t_n)\|} = z
$$
for some $z \in S^1$. Let
$\alpha: \Br \to \Br^2$ be a minimal axis such that $\pm \delta(\alpha) \not= z$.
Furthermore, we can choose $\alpha$ and a disjoint translate $\alpha'$ in such a
way that $K$ and $\{ c(t_n) \mid n \ge n_0 \}$ are contained in the two
different halfplanes of $\Br^2 \setminus (\alpha(\Br) \cup \alpha'(\Br))$
for sufficiently large $n_0$. This implies that the geodesic ray
$c:[0 , \infty) \to \Br^2$ will intersect $\alpha$ and $\alpha'$ infinitely
often. By Corollary~\ref{2g} the metric $g$ has positive topological entropy.
\end{proof}


\section{Existence and Regularity of the Rotation Number}
In the proof of Theorem~I we will need the following property of minimal geodesics:
\begin{remo}
As already mentioned, the study of minimal geodesics on $T^2$ goes back to
H.~M.~Morse~\cite{Morse}, G.~A.~Hedlund~\cite{Hedlund}, and V.~Bangert~\cite{B1988}.
Central results are that for each $r \in \Br \cup \{\infty\}$ there exists
a minimal geodesic $c:\Br \to \Br^2$ with asymptotic direction $r$ and that
furthermore there exists a constant $D>0$, such that for each minimal geodesic
$c: \Br \to \Br^2$ there exists a Euclidean line $l_c$, and for each Euclidean
line $l_c$ there exists a minimal geodesic $c$ such that
$$d (l_c, c(t)) \leq \frac{D}{2}, \quad \text{for all $t \in \Br.$}$$
Furthermore, we can assume that $D$ is larger than the diameter of the
fundamental domain. As shown by V.~Bangert~\cite{B1988}, for irrational rotation
numbers the set of minimal geodesics with this rotation number is totally ordered,
i.e., all these minimal geodesics have pairwise no intersections with each other.
For the set of minimal geodesics with a fixed rational rotation number the
subset of axes is ordered. Two minimal axes with the same rotation number bounding
a strip containing no further minimal axes are called neighboring minimals.
\end{remo}

\begin{proof}[Proof of Theorem~I]
Combining Corollary~\ref{bounded2}, Lemma~\ref{osc}, and Theorem~\ref{no-intersections}
we conclude that in the case of vanishing topological entropy all geodesic rays
are escaping and have no self-intersections on the universal covering $\Br^2$.\\
If for $c^+$ the asymptotic direction does not exist, there exist two
accumulation points $z_1 \not= z_2 \in S^1$ for the quotient
$\frac{c^+(t)}{\|c^+(t)\|}$ as $t$ tends to $\infty$. Choose a rational direction
$z \in S^1$ which lies in the connected component of $S^1 \setminus\{z_1,z_2\}$,
which is not larger than a half-circle. Let $\alpha$ be a minimal axis of the
primitive translation element $\tau$ with $\delta(\tau)=z$. Then the geodesic
$c:[0, \infty) \to  \Br^2$ must intersect $\alpha(\Br)$ and a given disjoint
translate $\eta(\alpha(\Br))$ with $[\eta] \not= [\tau]$ (which is a minimal axis
of $\tau$ as well) infinitely often. But then, by Corollary~\ref{2g} the metric
$g$ has positive topological entropy in contradiction to the assumption.\\
Assume that $\delta(c^+) \not= - \delta(c^-)$.
Choose $z \in S^1$ rational such that $\pm z$ both lie in one
connected component of $S^1 \setminus \{\delta(c^+), \delta(c^-)\}$. Let
$\alpha$ be an axis with $\delta(\alpha)=z$ which intersects $c$. Then $\alpha$
divides $\Br^2$ in two halfplanes. There is precisely one of those halfplanes
denoted by $A$ for which there exists $t_0 >0$ such that $c(t) \in A$ for $|t| \ge t_0$.
Choose a disjoint translate $\tau \alpha$ which is contained in $A$ as well.
Then there  exist $a <b$ such $c(a) , c(b) \in \tau(\alpha( \Br))$ and
$c[a,b] \tri \alpha(\Br) \not= \emptyset$ which contradicts Theorem~\ref{1g}.\\
We will now show that there exists a Euclidean strip $S(c)$ with direction
$\delta(c^+)$ such that $c(\Br) \subset S(c)$. We already know that the
asymptotic directions exist and that $\delta(c^+)=-\delta(c^-)$.
Assume that $c$ is not contained in a Euclidean strip. Consider an arbitrary
minimal geodesic with asymptotic direction $\delta(c^+)$. As minimal geodesics
are accompanied by Euclidean lines with the same direction and as $c$ is not
bounded by any strip with direction $\delta(c^+)$, c also intersects an infinite
number of translates of this geodesic. Consider $D>0$ as introduced in the
previous remark. Choose a translation element $\tau$, a time $t_0 >0$ and a
minimal geodesic $b$ with asymptotic direction $\delta(c^+)$ such that
$c[0 , t_0]$ crosses $b$ and $\tau b$, $c(0)$ and $c(t_0)$ lie in different
halfplanes of $\Br^2 \setminus (b(\Br) \cup \tau b(\Br))$ and such that there
fits a Euclidean strip $R$ with direction $\delta(c^+)$ and width $4D$ between
$b$ and $\tau b$, see Figure~\ref{fig-ref6}.
\begin{figure}[h!]
\begin{center}
\psfrag{a}{$c(0)$}
\psfrag{f}{$R$}
\psfrag{b}[r]{$c(t_0)$}
\psfrag{c}{$S$}
\psfrag{d}{$b$}
\psfrag{e}{$\tau b$}
\psfrag{g}{$c$}
\psfrag{j}{$\alpha$}
\psfrag{i}{$\eta \alpha$}
\includegraphics[scale=0.55]{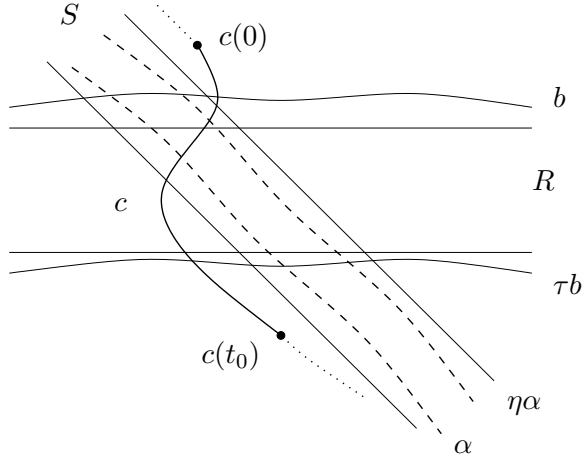}
\end{center}
\caption{Illustration of the argumentation in the second part of the proof of Theorem~I.} \label{fig-ref6}
\end{figure}
Then there exists a Euclidean strip $S$ with width $3D$ and a rational
asymptotic direction $z$ near $\delta(c^+)$ on $S^1$ but $z \not= \delta(c^+)$
such that $c(0)$ and $c(t_0)$ lie in different halfplanes of $\Br^2 \setminus S$.
As $S$ has width $3D$, by the previous remark there exists a minimal axis $\alpha$
and a disjoint translate $\eta \alpha$ with asymptotic direction $\delta(\alpha)=z$
and $(\alpha(\Br) \cup \eta \alpha(\Br)) \subset S$. Furthermore, the minimal axes $\alpha $
and $\eta\alpha$ intersect $c^+$. Since $\delta(\alpha^+)=z \not= \delta(c^+)$,
the ray $c^+$ has to intersect $\alpha $ and $\eta\alpha$ a second time which
implies positive topological entropy by Theorem~\ref{1g}.
\end{proof}
Now we like to prove regularity properties of the rotation number. We begin with
the following lemma which holds for all Riemannian metrics.

\begin{lemma} \label{surj}
For all $r \in \Br \cup \{\infty\}$ and $x \in T^2$ there exists $v=v(r) \in ST^2$
such that the corresponding geodesic $c_v:\Br \to T^2$ fulfills $c_v(0)=\pi(v)=x$,
$\dot c_v(0)=v$ and for the lift of $c_v$ on the universal covering it holds $\rho(c_v^+)=r$.
Let $E_x=\{v \in S_xT^2\;|\; \rho(v) \text{ exists }\}$. Then, for fixed $x \in T^2$ the map
$$f_x: E_x \to \Br \cup \{\infty\} \quad \text{with} \quad v \mapsto \rho(v)$$
is surjective.
\end{lemma}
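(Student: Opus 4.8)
The plan is to establish surjectivity of $f_x$ by producing, for each prescribed $r \in \Br \cup \{\infty\}$, at least one unit vector $v \in S_xT^2$ whose forward geodesic ray has rotation number exactly $r$. The natural candidate is a minimal geodesic: by the remark preceding this lemma (the results of Morse, Hedlund, and Bangert), for each $r \in \Br \cup \{\infty\}$ there exists a minimal geodesic $c:\Br \to \Br^2$ with asymptotic direction $\delta(c^+)$ corresponding to $r$. The difficulty is that a minimal geodesic realizing direction $r$ need not pass through the prescribed basepoint $x$. So the first step is to handle the basepoint constraint.

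First I would fix $r$ and the direction $z \in S^1$ with $\pi(z) = r$ and produce a minimal geodesic $b:\Br \to \Br^2$ with $\delta(b^+) = z$. Then I would use the accompanying Euclidean line: by the cited remark, $b$ stays within Euclidean distance $D/2$ of a line $l_b$ of slope $r$, and moreover every such line is accompanied by some minimal geodesic. To reach $x$ (or rather its lift $\tilde x \in \Br^2$), I would translate: consider the family of lines $l$ of fixed slope $r$ at varying Euclidean heights, each accompanied by a minimal geodesic $c_l$ confined to the strip of width $D$ about $l$. As the height sweeps across $\Br^2$, these minimal geodesics sweep across a neighborhood of $\tilde x$, so some minimal geodesic of direction $z$ passes within the fundamental domain of $\tilde x$; composing with a covering translation $\tau_{(m,n)}$ brings the whole configuration back so that a minimal geodesic of the same direction $r$ passes through $\tilde x$ itself. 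Concretely, it suffices to invoke that the minimal geodesics of direction $r$ foliate (in the irrational case) or at least cover $\Br^2$, so through every point there is one of asymptotic direction $r$.

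The step I expect to be the main obstacle is the rational case $r \in \Bq \cup \{\infty\}$, where the minimal geodesics of direction $r$ need not cover all of $\Br^2$: the axes are merely \emph{ordered}, leaving gaps (strips between neighboring minimals) containing no minimal axis. For a point $\tilde x$ lying strictly inside such a gap I cannot simply select a minimal axis through it. Here I would instead use a heteroclinic/asymptotic minimal geodesic: between two neighboring minimal axes bounding a strip there exist minimal geodesics asymptotic to one axis forward and to the other backward (in Bangert's description of the rational case), and these do sweep out the interior of the strip. Such a geodesic still has $\delta(c^+) = z$, hence rotation number $r$, and one can arrange it to pass through $\tilde x$. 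Thus in every case a minimal geodesic of direction $r$ passes through the prescribed basepoint.

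Finally, with such a minimal geodesic $c$ through $\tilde x$ in hand, I set $v := \dot c(0) \in S_xT^2$ after translating the parametrization so that $c(0) = \tilde x$. Since $c$ is minimal it is escaping with $\delta(c^+) = z$ (minimal geodesics are accompanied by Euclidean lines and in particular the limit $\delta$ exists), so $\rho(v) = \rho(c^+) = \pi(z) = r$ and in particular $v \in E_x$. Because $r \in \Br \cup \{\infty\}$ was arbitrary, $f_x$ is surjective, proving the lemma. I would note that this argument, unlike the surrounding theorems, invokes only the classical theory of minimal geodesics and makes no use of the zero-entropy hypothesis, consistent with the stated claim that the lemma holds for all Riemannian metrics.
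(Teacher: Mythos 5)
Your overall strategy is sound and you are right that the lemma is independent of the zero-entropy hypothesis, but your route differs genuinely from the paper's. The paper does not try to find a minimal geodesic of rotation number $r$ passing through $x$; instead it fixes one minimal geodesic $\gamma$ with rotation number $r$ (anywhere in the plane), joins the lift $\tilde x$ to the points $\gamma(t)$ by minimizing geodesic segments $c_t$ with $c_t(0)=\tilde x$, extracts a convergent subsequence of the initial vectors $\dot c_t(0)$ as $t\to\infty$, and shows that the limiting ray is trapped in the strip between $\gamma$ and a disjoint translate $\tau\gamma$ containing $\tilde x$: by minimality each $c_t$ meets $\gamma$ only at its endpoint $\gamma(t)$ and never meets $\tau\gamma$, so the limit ray meets neither curve and hence stays in the strip, which forces $\rho(c^+)=r$. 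This co-ray construction uses only the facts recalled in the remark preceding the lemma (existence of a minimal geodesic with each rotation number and the bounded distance to an accompanying Euclidean line), whereas your argument needs the substantially stronger input that through \emph{every} point of $\Br^2$ there passes a minimal geodesic of \emph{every} prescribed rotation number.

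That stronger input is where your write-up has a genuine soft spot. The sweeping argument with translated Euclidean lines only places a minimal geodesic of direction $r$ within distance $D/2$ of $\tilde x$, not through it, and your fallback claim that the minimal geodesics of irrational rotation number ``foliate'' $\Br^2$ is false in general: the recurrent minimals of irrational rotation number may form a Cantor lamination with gaps (the Denjoy/Aubry--Mather phenomenon), exactly parallel to the gaps between neighboring periodic minimals that you do address in the rational case. The covering statement you actually need is true --- it is part of Bangert's classification, the gaps being filled by non-recurrent minimals asymptotic to the boundary leaves, and it can also be obtained by taking limits of minimals through $\tilde x$ with rational rotation numbers tending to $r$ --- but it is a theorem that must be cited or proved, not a consequence of the remark the paper provides. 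If you supply that citation or carry out for the irrational case the same gap analysis you sketched for the rational one, your proof closes; as it stands, the paper's limiting-segment argument is the more economical and self-contained route.
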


\begin{proof}
We fix $r \in \Br \cup \{\infty\}$ and $x \in T^2$. By the remark at the beginning
of this section there exists a minimal geodesic $\gamma$ with rotation number $r$.
We assume that $x \notin \gamma(\Br)$, otherwise we set $\gamma=c$ after a
reparameterization such that $c(0)=x$ and so we choose $v=\dot c(0)$. \\
We consider minimal geodesic segments $c_t$ connecting $x$ and $\gamma(t)$ on the
universal covering such that $c_t(0)=x$ with $\dot c_t(0)=:w_t \in S_x \Br^2$.
As $S_x\Br^2$ is compact there exists a sequence $t_n$ with $t_n \to \infty$ such
that $w_{t_n}$ converges to $w$. By its minimality the segment $c_t$ intersects
$\gamma(\Br)$ only once in $\gamma(t)$.
\\
Let $\tau \gamma$ be a translate of $\gamma$ such that $x$ lies in the bounded strip
$S$ between $\gamma(\Br)$ and $\tau \gamma(\Br)$. Repeating the minimality arguments
we conclude that $c_t$ and $\tau \gamma(\Br)$ have no intersections for all $t$.
By the continuous dependence on the initial conditions also the limit geodesic ray
$c^+$ with $c^+(0)=x$ and $\dot c^+(0)=w$ will not intersect $\tau \gamma(\Br)$ and
$\gamma(\Br)$.\\
Hence, the geodesic ray $c^+$ lies in the geodesic strip $S$ with rotation number $r$.
This implies the existence of the rotation number and even that $\rho(c^+)=r$.
We extend $c^+$ on $T^2$ to the geodesic $c: \Br \to T^2$ with $c(0)=x$ and $\dot c(0)=w$.
The choice $v=w$ fulfills the required properties.
\end{proof}

\begin{proof}[Proof of Theorem~II]
By Lemma~\ref{surj} the rotation number is surjective. We have to show its continuity.
As the topological entropy vanishes, by Theorem~I the asymptotic directions $\delta(v)$
and $\delta(-v)$ exist and it holds $\delta(v)=- \delta(-v)$ for all $v \in S \Br^2$.
Here, we think of the asymptotic directions as a function on the unit tangent bundle
of $\Br^2$. Assume that $\delta$ is not continuous at $v_0 \in S\Br^2$. Let $c$ be
the geodesic with the initial condition $\dot c(0)=v_0$ and let $\delta(c^+)=z$.
As $\delta$ is not continuous at $v_0$, there exists a sequence $v_n \to v_0$ such
that $\delta(v_n)$ does not converge to $z$.
Since $S^1$ is compact there exists a subsequence $v_{n_k}$ with
$\delta(v_{n_k}) \to \tilde z \not= z$. We will again denote this subsequence by $v_n$
and the geodesics corresponding to $v_n$ by $c_n$, i.e.\ $\dot c_n(0)=v_n$.\\
Consider the two Euclidean rays $r: [0, \infty) \to \Br^2$ and $\tilde r: [0, \infty) \to \Br^2$
given by $r(t) = tz $ and $\tilde r(t) = t \tilde z$. Choose  a minimal axis
$\gamma: \Br \to \Br^2$ and a disjoint translate $\gamma': \Br \to \Br^2$ such that
the two halfplanes $H_1$ and $H_2$ which are two of the three connected components of
$\Br^2 \setminus (\gamma(\Br) \cup \gamma'(\Br))$ have the following properties:
First, there exists an $\varepsilon>0$ such that $B(c(0), \varepsilon) \subset H_1$.
Second, the sets $H_2 \cap r[0, \infty)$ and $H_1 \cap \tilde r[0, \infty)$ are
unbounded, see Figure~\ref{fig-ref7}.
\begin{figure}[h!]
\begin{center}
\psfrag{a}{$\tilde r$}
\psfrag{b}{$r$}
\psfrag{c}{$c$}
\psfrag{d}{$c_n$}
\psfrag{e}{$\gamma$}
\psfrag{f}{$\gamma'$}
\psfrag{g}{$H_1$}
\psfrag{h}{$H_2$}
\psfrag{c0}{$c(0)$}
\psfrag{cnt}[c]{$c_n(t_1)$}
\psfrag{cns}{$c_n(t_2)$}
\includegraphics[scale=0.7]{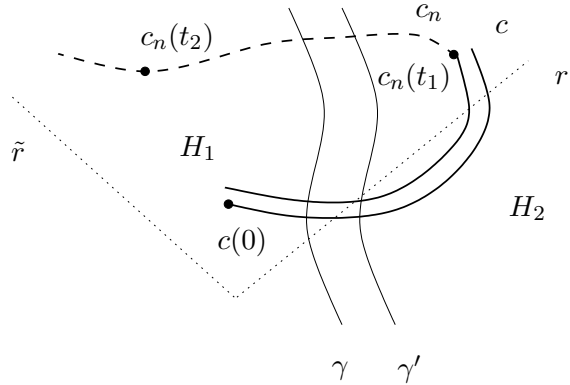}
\end{center}
\caption{Illustration of the geodesics $c$ and $c_n$ in the proof of Theorem~II.} \label{fig-ref7}
\end{figure}
Since $\delta(c^+)=z$ and $B(c(0), \varepsilon) \subset H_1$, the continuous
dependence on the initial conditions implies the existence  of  $n_0 \in \mathbb{N}$
such that for all $n \ge n_0$
we have $c_n(0) \in H_1$ and $c_n(t_1) \in H_2$ for a suitable $t_1 > 0$. Moreover,
$\delta(v_{n}) \to \tilde z$ as $n \to \infty$ yields the existence of  $n_1 \ge n_0$
such that the set $H_1 \cap c_n[0 , \infty)$ is unbounded for each $n \ge n_1$.
In particular, for a given $n \ge n_1$ we will find $t_2 \ge t_1$ such that $c_n(t_2) \in H_1$.
Such a geodesic $c_n$  crosses the pair of minimal axes $\gamma$ and $\gamma'$ at least
twice which implies by Theorem~\ref{1g} positive topological entropy in contradiction to
the assumption in Theorem~II. Hence, the asymptotic direction is continuous and its
continuity implies the continuity of the rotation number.
\end{proof}

\section{Further Conditions for Vanishing Topological Entropy}
In this section we show that vanishing topological entropy implies strong restrictions
of the intersections of geodesics on the universal cover with their translates.

\begin{proof}[Proof of Theorem~III]
As $h_{top}(g)=0$, by Theorem~I the asymptotic direction $\delta(c)$ exists for all
$c$ and for each $c$ there exists a Euclidean strip $S(c)$ bounding $c$.
Assume there exists a geodesic ray $c:[0, \infty) \to \Br$ with $\#I(c) \geq 2$.
Consider a translation element $\tau$ with $\#\{c \tri \tau c\}= \infty$ and
$\delta(\tau) \not=\pm \delta(c)$, e.g.\ it does not leave $S(c)$ invariant.
Let $k \in \Bz$ be large enough such that the distance between $S(c)$ and
$\tau^k(S(c))$ is larger than $4D$ with $D>0$ introduced in the remark at the
beginning of the previous section. We denote the Euclidean strip between $S(c)$
and $\tau^k (S(c))$ by $E$. For each $l \in \{0,\dots, k\}$ we denote the connected
component of $\Br^2 \setminus \tau^l c(\Br)$ which contains $\tau^{k+l} c(\Br)$ by
$A_{\tau^l c}$ and by $B_{\tau^l c}$ the other one. \\
As $c(\Br) \subset S(c)$ it follows that $\tau^k c(\Br) \subset \tau^k(S(c))$.
Consider $t_0 \in [0, \infty)$. As $\#\{c \tri \tau c\}= \infty$ there exist
$t_1,\tilde t_1>t_0$ with $c(t_1)=\tau c(\tilde t_1)$ such that $\tau c$ passes in
$c(t_1)$ from $B_c$ to $A_c$. Analogously there exist $t_2,\tilde t_2>\max\{t_1, \tilde t_1\}$
with $\tau c(t_2)=\tau^2 c(\tilde t_2)$ such that $\tau^2 c$ passes in $\tau c(t_2)$
from $B_{\tau c}$ to $A_{\tau c}$. By this construction we get a broken geodesic segment
$$ \alpha[t_0, t_{k}]:=c[t_0,t_1] \cup \tau c[\tilde t_1, t_2] \cup
\tau^2 c[\tilde t_2, t_3]\cup \dots \cup \tau^{k-1} c[\tilde t_{k-1}, t_{k}]$$
connecting $c$ and $\tau^k c$. Consider $s_0>t_{k}$ and analogously to the previous
construction there exist $s_1,\tilde s_1>s_0$ with $\tau^k c(s_1)=\tau^{k-1} c(\tilde s_1)$
such that $\tau^{k-1} c$ passes in $\tau^k c(s_1)$ from $A_{\tau^k c}$ to $B_{\tau^k c}$.
Analogously we get the broken geodesic segment
$$ \beta_1[s_0, s_{k}]:=\tau^k c[s_0,s_1] \cup \tau^{k-1} c[\tilde s_1, s_2]
\cup \tau^{k-2} c[\tilde s_2, s_3]\cup \dots \cup  \tau c[\tilde s_{k-1}, s_{k}].$$
Gluing these two geodesic segments we obtain the broken geodesic segment
$$V_1:=\alpha([t_0, t_{k}]) \cup \tau^k c([\tilde t_{k}, s_0]) \cup \beta_1([s_0, s_{k}])$$
which connects $S(c)$, $\tau^k(S(c))$, and $S(c)$. We call the unbounded connected
component of $\Br^2 \setminus\{S(c) \cup V_1\}$ which is not a Euclidean halfplane the
exterior of $V_1 \cup S(c)$. By construction the exterior angles of $V_1 \cup S(c)$ are
smaller than $\pi$. Analogously we construct a broken geodesic $V_2$ connecting $\tau^k(S(c))$,
$S(c)$, and $\tau^k(S(c))$ such that the exterior angles of $V_2 \cup \tau^k S(c)$ are
smaller than $\pi$. Consider two minimal axes $\xi_1$ and $\xi_2$ of $\tau$ bounding a
geodesic strip $G$ such that $V_1,V_2 \subset G$. As the distance of $S(c)$ and
$\tau^k(S(c))$ is larger than $4D$ consider three ordered translates of minimal axes
denoted by $\alpha_1$, $\alpha_2$, and $\alpha_3$ with a rational asymptotic direction
near $\delta(c^+)$ such that $\alpha_i \cap G \subset E$ for all $i \in \{1,2,3\}$.
Then, there exist subsegments $c_1$ and $c_2$ of $V_1$ and $V_2$ with endpoints on
$\alpha_2$ intersecting $\alpha_1$ or $\alpha_3$, respectively. These broken geodesic
segments $c_1$ and $c_2$ fulfill by construction the assumptions of the segments in the
Fundamental Lemma~\ref{fundamentallemma}.
\begin{figure}[h]
\begin{center}
\psfrag{a}{$V_1$}
\psfrag{b1}{$ $}
\psfrag{b2}{$ $}
\psfrag{g}{$V_2$}
\psfrag{E}{$E$}
\psfrag{G}{$G$}
\psfrag{x1}{$\xi_1$}
\psfrag{x2}{$\xi_2$}
\psfrag{sc}[c]{$S(c)$}
\psfrag{a1}{$\alpha_1$}
\psfrag{a2}{$\alpha_2$}
\psfrag{a3}{$\alpha_3$}
\psfrag{tsc}[c]{$\tau^k (S(c))$}
\includegraphics[scale=0.35]{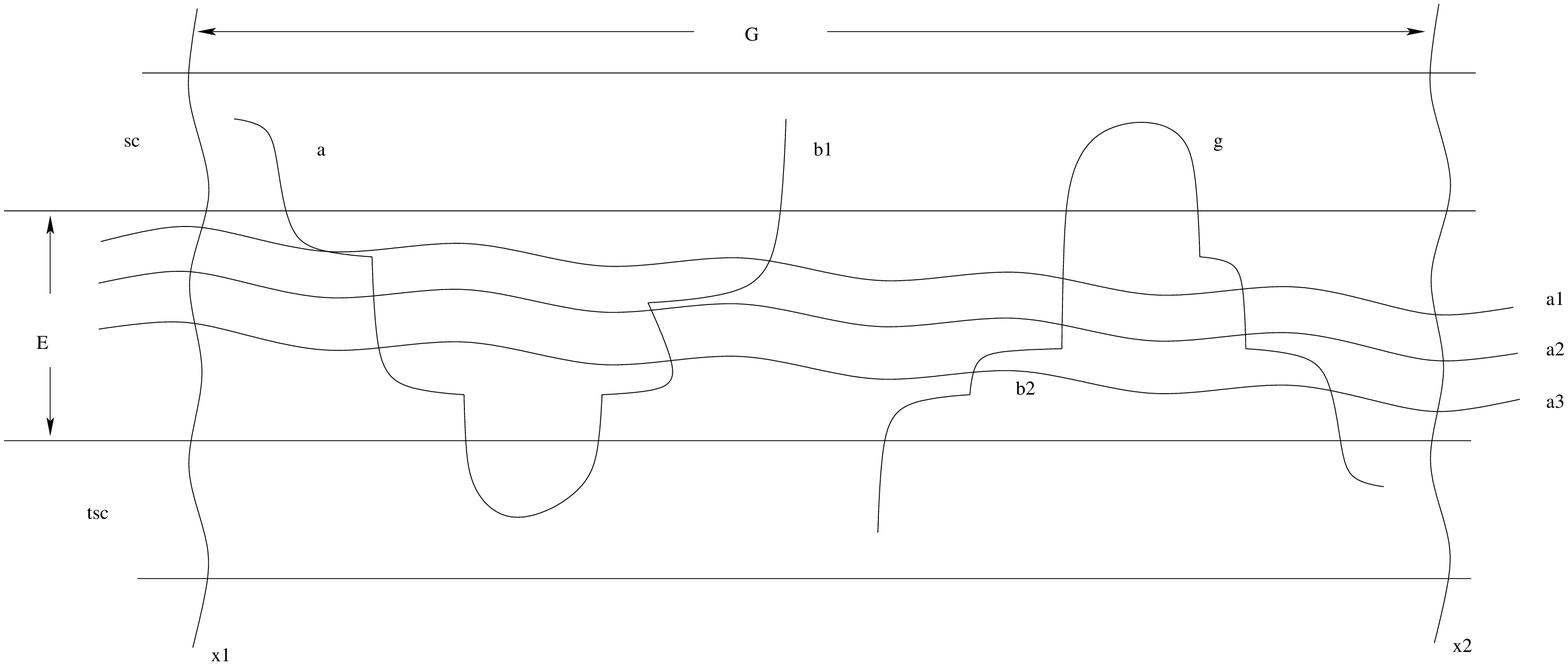}
\end{center}
\caption{Illustration of the broken geodesic segments $V_1$ and $V_2$ in the proof of Theorem~III.}
\end{figure}
Hence, $h_{top}(g)>0$ in contradiction to the assumption.
Furthermore, by construction we conclude, that geodesics $c$ with irrational
rotation number intersect their translates $\tau c$ only a finite number of times.
Geodesics $c$ with rational rotation number intersect their translates $\tau c$ an
infinite number of times at most for $\tau$ with $\delta(\tau)=\delta(c^+)$.
\end{proof}


\section{Characterization of Flatness}

In order to prove Theorem~IV we first prove the following lemma due to V.~Bangert
which is of independent interest.

\begin{lemma}\label{lemmaflat}
Let $(T^2, g)$ be a Riemannian torus. Suppose there exists a primitive translation element $\tau$
such that its minimal axes do not foliate $\mathbb{R}^2$. Then there exists $k \geq 2$ and a
non-primitive axis of $\tau^k$.
\end{lemma}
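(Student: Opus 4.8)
The plan is to exploit a gap in the lamination of minimal axes. Since the minimal axes of $\tau$ are totally ordered (see the remark at the beginning of Section~5), and the set of minimal geodesics of a fixed rotation number is closed, the failure to foliate $\Br^2$ produces a maximal open gap: two \emph{neighboring} minimal axes $\alpha_1,\alpha_2$ of $\tau$ bounding a $\tau$-invariant open strip $S$ that contains no minimal axis of $\tau$. On the cylinder $C=\Br^2/\langle\tau\rangle$ this descends to a gap annulus $\bar A$ bounded by the minimal closed geodesics $\bar\alpha_1,\bar\alpha_2$. By the theory of minimal geodesics with rational rotation number (Morse, Hedlund, and V.~Bangert~\cite{B1988}), such a gap carries at least one heteroclinic minimal geodesic $h$ which is disjoint from $\alpha_1$ and $\alpha_2$, crosses the strip $S$, and is asymptotic to $\alpha_1$ on one end and to $\alpha_2$ (up to a $\tau$-translate) on the other. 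This $h$ will serve the role that the contractible closed geodesic plays in Corollary~\ref{closed}: an obstacle blocking the collapse of an evolving curve onto the boundary of the gap.

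I would then construct the axis by Curve Shortening on a cyclic cover, imitating the Fundamental Lemma~\ref{fundamentallemma} and Corollary~\ref{closed}. Fix $k\ge 2$ and pass to $C_k=\Br^2/\langle\tau^k\rangle$, a $k$-fold cover of $C$ on which $\tau$ induces a deck transformation $T$ of order $k$. The boundary minimals lift to closed geodesics $\bar\alpha_1^{(k)},\bar\alpha_2^{(k)}$, and $h$ lifts to a geodesic $\bar h^{(k)}$ whose two ends spiral onto these. I would choose a smooth closed curve $\gamma_0\subset\bar A_k$ in the core homotopy class of $C_k$ (hence winding $k$ times around $C$), lying strictly between the two boundary minimals, taken to be non-$T$-symmetric and to cross $\bar h^{(k)}$ and suitable $T$-translates of it transversally, in a pattern that cannot be undone by an isotopy fixing the ends of $\bar h^{(k)}$ at the boundary. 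Under Curve Shortening, Theorem~\ref{2seg} (applied to long subsegments of $\bar h^{(k)}$ with endpoints arbitrarily close to the boundary, which $\gamma_t$ never reaches) guarantees that $\gamma_t$ never crosses the stationary geodesics $\bar\alpha_1^{(k)},\bar\alpha_2^{(k)}$, so it stays trapped in a compact part of the gap annulus, and that its number of intersections with the stationary obstacle $\bar h^{(k)}$ is nonincreasing. As $\gamma_0$ is not contractible it cannot shrink to a point, so Grayson's Theorem~\ref{GO} yields a closed geodesic $\bar\beta$ in the closed gap annulus as a limit of the flow.

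It then remains to verify that $\bar\beta$ lifts to a \emph{non-primitive} axis of $\tau^k$. Because the preserved intersection number of $\gamma_t$ with $\bar h^{(k)}$ stays positive, $\bar\beta$ still meets $\bar h^{(k)}$; since the boundary minimals $\bar\alpha_i^{(k)}$ do not meet $\bar h^{(k)}$, the limit $\bar\beta$ is distinct from them and lies in the \emph{open} gap, where by hypothesis it cannot be minimal, so it oscillates between the two sides of $S$. Lifting $\bar\beta$ gives a geodesic $\beta$ with $\tau^k\beta(\Br)=\beta(\Br)$, and the built-in asymmetry forces $\tau\beta(\Br)\neq\beta(\Br)$, i.e.\ $\beta$ is a non-primitive axis of $\tau^k$.

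The main obstacle is exactly this final point: forcing the limit to be of genuine winding type $k\ge 2$ and not $T$-invariant. A plain length minimizer in the core class of $C_k$ would be the $k$-fold cover of a boundary minimal, a primitive object, and any \emph{embedded} closed geodesic in the open gap would itself be minimal (by the standard fact that simple closed geodesics on $T^2$ are minimal); hence the sought geodesic must be non-minimal and self-intersecting, arising from a saddle-type (minimax) mechanism rather than from minimization. The delicate task is therefore to design $\gamma_0$ and choose $k$ so that the intersection bookkeeping of Theorem~\ref{2seg} against $\bar h^{(k)}$ and its $T$-translates simultaneously excludes collapse onto a boundary minimal and symmetrization to a $T$-invariant (primitive) geodesic, pinning the limit into the open gap as a non-primitive axis. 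This is precisely where Bangert's variational input—that a gap between neighboring minimals is filled by heteroclinics and must carry a non-minimal periodic geodesic of higher multiplicity—enters.
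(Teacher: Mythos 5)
Your first step --- extracting two neighboring minimal axes of $\tau$ bounding a gap strip and passing to the cylinder $C=\Br^2/\langle\tau\rangle$ --- is exactly how the paper begins. But the Curve Shortening route you take from there has a genuine gap at precisely the point you flag yourself: nothing in your construction forces the limit geodesic $\bar\beta$ to be non-primitive, and the intersection count with the heteroclinic $\bar h^{(k)}$ cannot do that job. First, it does not exclude collapse onto the boundary: $h$ accumulates on $\bar\alpha_1$, so curves converging to the $k$-fold cover of $\bar\alpha_1$ can still carry a positive intersection number with $h$, and the endpoint hypothesis of Theorem~\ref{2seg} for subsegments of $\bar h^{(k)}$ ``with endpoints arbitrarily close to the boundary'' is not guaranteed, since you have no a priori bound keeping $\gamma_t$ away from the boundary minimals. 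Second, even granting that the limit lies in the open gap, a $\tau$-invariant (hence primitive) closed geodesic there would also separate the two boundary components of the gap annulus and therefore would also meet $h$; so meeting $h$ does not distinguish primitive from non-primitive limits. The ``built-in asymmetry'' of $\gamma_0$ is not an invariant of the flow, and your closing appeal to ``Bangert's variational input --- that a gap between neighboring minimals must carry a non-minimal periodic geodesic of higher multiplicity'' is circular: that statement \emph{is} the lemma.

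The paper closes this step with a purely variational length comparison that your proposal is missing. Since the gap contains no minimal axis, every closed curve between $c_1$ and $c_2$ homotopic to $c_1$ has length $>\ell$, and one gets an $\varepsilon>0$ such that any such curve of length $\le \ell+\varepsilon$ must lie in prescribed locally convex neighborhoods $C_1\cup C_2$ of the boundary minimals. Bangert's construction (\cite{B2}, pp.~87/88) then supplies, for every $n$, a closed geodesic $a_n$ homotopic to the $n$-fold iterate of $c_1$, of length at most $n\ell+A$ with $A$ independent of $n$, and \emph{not} contained in $C_1\cup C_2$. Choosing $n\ge 2$ with $A/n<\varepsilon$: if $a_n$ were the $n$-th iterate of a closed curve $b_n$, then $L(b_n)\le \ell+A/n<\ell+\varepsilon$ would force $b_n$, hence $a_n$, into $C_1\cup C_2$, a contradiction; so $a_n$ is a non-primitive axis of $\tau^n$. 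It is this quantitative comparison --- cost of leaving the neighborhoods versus cost per iterate --- and not intersection bookkeeping under Curve Shortening, that rules out primitivity.
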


\begin{proof}
If the minimal axes of $\tau$ do not foliate $\mathbb{R}^2$, then there exist two neighboring
minimal axes $\tilde c_1, \tilde c_2: \mathbb{R} \to \mathbb{R}^2$ of $\tau$. Consider the cylinder
$\mathbb{R}^2 \diagup \langle \tau \rangle =C$ and the projections $c_1, c_2:\mathbb{R} \to C$
of $\tilde c_1$ and $\tilde c_2$ which are closed geodesics of equal length $\ell$. By assumption
all closed curves between $c_1$ and $c_2$ in the same homotopy class have length strictly larger
than $\ell$. Choose locally convex neighborhoods $C_1$ and $C_2$ about $c_1$ and $c_2$. (Take for
instance minimal geodesic loops homotopic to $c_1$ and $c_2$, respectively, close to $c_1$ and $c_2$.)
There exists $\varepsilon>0$ such that for all closed curves $\gamma$ homotopic to $c_1$ with
$L(\gamma) \leq \ell + \varepsilon$ we obtain that
$$\gamma \text{ is contained in } C_1 \cup C_2 \enspace. \qquad \qquad (\ast)$$
Using the methods in \cite{B2}, pp. 87/88 we obtain a constant $A>0$ such that for all $n>0$
there exists a closed geodesic $a_n$ between $c_1$ and $c_2$ with the following properties:
\begin{enumerate}
\item[(i)]
$a_n$ is homotopic to $c_1:[0, n\ell] \to C$
\item[(ii)]
$L(a_n) \leq n \ell + A$
\item[(iii)]
$a_n$ is not contained in $C_1 \cup C_2$
\end{enumerate}
Choose $n \geq 2$ large enough such that $\frac{A}{n} < \varepsilon$. Then there does not exist a curve
$b_n$ homotopic to $c_1:[0, \ell] \to C$ whose $n$-th iterate is equal to $a_n$. Otherwise using
property (ii),
$$L(b_n)=\frac{1}{n}L(a_n) \leq \ell + \frac{A}{n} \leq \ell + \varepsilon.$$
But by ($\ast$) this would imply that $b_n$ and hence $a_n$ is contained in $C_0 \cup C_1$ which
contradicts property (iii). Hence, $a_n$ is a non-primitive axis of $\tau^n$.
\end{proof}

\begin{proof}[Proof of Theorem~IV]
Obviously the flatness of the metric on $T^2$ implies that no periodic geodesic (and in fact no
geodesic) intersects its translates. Conversely, assume that no axis intersects its translates.
Then Lemma~\ref{lemmaflat} implies that for each primitive translation element $\tau$ the
associated minimal axes foliate $\mathbb{R}^2$. Since all axes are minimal the result of
N.~Innami in \cite{I} implies that $(T^2,g)$ does not have conjugate points. Then by E.~Hopf's
theorem \cite{H} the metric $g$ is flat.
\end{proof}

{\it Acknowledgement.} The authors like to thank Sigurd Angenent for explaining to us the essential features of the Curve Shortening flow which is a crucial technique in this paper. We are also grateful to Victor Bangert for explaining to us his variational methods which led to Theorem~IV.


\begin{thebibliography}{99}
\bibitem{A-int}
S.~B.~Angenent, \textit{Monotone recurrence relations, their Birkhoff orbits and
topological entropy}, Ergod. Th. and Dynam. Sys., \textbf{10} (1990), 15-41.
\bibitem{A-II}
S.~B.~Angenent, \textit{Parabolic equations for curves on surfaces, Part II.
Intersections, blow-up, and generalized solutions},
Annals of Mathematics, \textbf{133} (1991), 171-215.
\bibitem{A-pre}
S.~B.~Angenent, \textit{Self-intersecting Geodesics and Entropy of the
Geodesic Flow}, preprint 2005, \texttt{http://www.math.wisc.edu/$\sim$angenent/preprints.html}
\bibitem{Aubry}
S.~Aubry, P.~Y.~Le~Daeron, \textit{The discrete Frenkel-Kontorova Model and its
Generalizations}, Physica, \textbf{8D} (1983), 381-422.
\bibitem{B2}
V.~Bangert, \textit{Closed Geodesics on Complete Surfaces}, Math. Ann. \textbf{251} (1980), 83-96.
\bibitem{B1981}
V.~Bangert, \textit{Geodesics and Totally Convex Sets on Surfaces},
Invent. math. \textbf{63} (1981), 507-517.
\bibitem{B1988}
V.~Bangert, \textit{Mather Sets for Twist Maps and Geodesics on Tori},
Dynamics Reported \textbf{1} (1988), 1-56.
\bibitem{BP}
M.~L.~Bialy and L.~V.~Polterovich, \textit{Geodesic Flows on the two-dimensional Torus and Phase Transitions 'Commensurability-Noncommensurability'}, English translation: Functional Anal. Appl.
\textbf{20} (1986) no. 4, 260-266.
\bibitem{BR}
S.~V.~Bolotin and P.~H.~Rabinowitz, \textit{Some geometrical conditions for the
existence of chaotic geodesics on a torus}, Ergod. Th. and Dynam. Sys.
\textbf{22} (2002), 1407-1428.
\bibitem{BO}
R.~ Bowen, \textit{Entropy for Group Endomorphisms and Homogeneous spaces},
Trans. of Am. Math. Soc. \textbf{153} (1971), 401-414.
\bibitem{Bowen}
R.~Bowen, \textit{Entropy-expansive maps}, Trans. of Am. Math. Soc.
\textbf{164} (1972), 323-331.
\bibitem{DM}
J.~Denvir and R.~S.~ MacKay, \textit{Consequences of contractible geodesics on
surfaces}, Trans. Amer. Math. Soc. \textbf{350} (1998), no. 11, 4553-4568.
\bibitem{Glas}
E.~Glasmachers, \textit{Characterization of Riemannian metrics on $T^2$ with
and without positive topological entropy}, Thesis 2007,
\texttt{http://www-brs.ub.rub.de/netahtml/HSS/Diss/GlasmachersEva/}
\bibitem{GO}
M.~A.~Grayson, \textit{Shortening embedded curves}, Ann. of Math. (2)
\textbf{129} (1989), no. 1, 71-111.
\bibitem{Hedlund}
G.~A.~Hedlund, \textit{Geodesics on a two-dimensional Riemannian manifold
with periodic coefficients}, Ann. of Math. \textbf{33} (1932), 719-739.
\bibitem{H}
E.~Hopf, \textit{Closed surfaces without conjugate points}, Proc. Natl.
Acad. Sci. \textbf{34} (1948), 47-51.
\bibitem{I}
N.~Innami, \textit{Families of geodesics which distinguish flat tori},
Math. J. Okayama Univ. \textbf{28} (1986), 207-217.
\bibitem{KH}
A.~Katok, B.~Hasselblatt, \textit{Introduction to the Modern Theory of
Dynamical Systems}, Cambridge University Press (1995).
\bibitem{K80}
A.~Katok, \textit{Lyapunov Exponents, Entropy and Periodic Orbits for
Diffeomorphisms}, Inst. Hautes \'{E}tudes Sci. Publ. Math. \textbf{51} (1980), 137-173.
\bibitem{Mather}
J.~N.~Mather, \textit{Existence of quasi-periodic orbits for twist
homeomorphisms of the annulus}, Topology, \textbf{21} (1982), 457-67.
\bibitem{Morse}
H.~M.~Morse, \textit{A fundamental class of geodesics on any closed surface of
genus greater than one}, Trans. Amer. Math. Soc. \textbf{26} (1924), 25-60.
\bibitem{W}
M.~Wojtkowski, \textit{Oscillating geodesics on two-dimensional manifolds},
Asterisque \textbf{51} (1978), 443-356.
\bibitem{Wo}
P.~Walters, \textit{An Introduction to Ergodic Theory}, Graduate Texts in
Mathematics, Springer-Verlag, New York, Berlin, Heidelberg (1982).
\end{thebibliography}
\end{document}